\documentclass[11pt]{article}
\title{Lower bounds for weak epsilon-nets and stair-convexity\footnote{A preliminary version
of this paper appeared in \emph{Proceedings of the 25th Annual
Symposium on Computational Geometry (SoCG 2009)}, ACM, New York, 2009, pp.~1--10.}}

\author{Boris Bukh\footnote{\texttt{B.Bukh@dpmms.cam.ac.uk}. Centre for Mathematical Sciences,
Cambridge CB3 0WB, England
and Churchill College, Cambridge CB3 0DS, England.
Work was carried out while the author was at Princeton University and at the R\'enyi Institute.
Work was partially supported by the project Phenomena in High Dimensions.}\and Ji\v
r\'i Matou\v sek\footnote{\texttt{matousek@kam.mff.cuni.cz}.
Department of Applied Mathematics and Institute of Theoretical
Computer Science (ITI), Charles University, Malostransk\'{e}
n\'{a}m. 25, 118~00~~Praha~1, Czech Republic.}\and Gabriel
Nivasch\footnote{\texttt{gabriel.nivasch@inf.ethz.ch}. Institute of
Theoretical Computer Science, ETH Z\"urich, CH-8092 Z\"urich, Switzerland.
Work was done while the author was at Tel Aviv University. Work was supported
by Israel Science Foundation Grant 155/05 and by the Hermann
Minkowski--MINERVA Center for Geometry at Tel Aviv University.}}

\date{}

 \setlength{\oddsidemargin}{0in}
 \setlength{\textwidth}{6.5in}
 \addtolength{\topmargin}{-0.65in}
 \addtolength{\textheight}{1.3in}

\usepackage{amsmath,amsfonts,amssymb,amsthm}
\usepackage{hyperref}
\usepackage{graphics}


\newcommand*{\abs}[1]{\lvert #1\rvert} 
\newcommand{\R}{\mathbb{R}}  
\newcommand{\z}{\mathcal Z}  
\newcommand{\Gs}{G_{\mathrm s}} 
\newcommand{\Gu}{G_{\mathrm u}} 
\newcommand{\I}{[0,1]} 
\newcommand\myll[1]{\mathrel{\ll_{#1}}}    
\newcommand\mygg[1]{\mathrel{\gg_{#1}}}    
\newcommand{\B}{\mathrm{BB}(\Gs)}    
\newcommand{\diag}{D_{\mathrm{s}}}          
\DeclareMathOperator{\conv}{\mathrm{conv}}   
\DeclareMathOperator{\sconv}{\mathrm{stconv}}   
\DeclareMathOperator{\vol}{\mathrm{vol}}   
\DeclareMathOperator{\tp}{top}

\newcommand\FF{{\mathcal F}}

\newcommand\eps{\varepsilon}

\newtheorem{theorem}{Theorem}[section]
\newtheorem{lemma}[theorem]{Lemma}
\newtheorem{corollary}[theorem]{Corollary}
\newtheorem{prop}[theorem]{Proposition}

\newenvironment{remark}[1][Remark]
{\begin{trivlist}\item\textbf{#1:\ }} {\end{trivlist}}

\makeatletter
\newcommand{\ProofEndBox}{{\ifhmode\unskip\nobreak\hfil\penalty50 \else
          \leavevmode\fi\quad\vadjust{}\nobreak\hfill$\Box$
            \finalhyphendemerits=0 \par}}%
\makeatother
\newcommand{\proofend}{\ProofEndBox\smallskip}

\begin{document}

\maketitle

\begin{abstract}
A set $N\subset\R^d$ is called a \emph{weak $\eps$-net} (with
respect to convex sets) for a finite $X\subset\R^d$ if $N$
intersects every convex set $C$ with $|X\cap C|\ge\eps|X|$. For
every fixed $d\ge 2$ and every $r\ge 1$ we construct sets
$X\subset\R^d$ for which every weak $\frac 1r$-net has at least
$\Omega(r\log^{d-1} r)$ points; this is the first superlinear lower
bound for weak $\eps$-nets in a fixed dimension.

The construction is a \emph{stretched grid}, i.e., the Cartesian
product of $d$ suitable fast-growing finite sequences, and convexity
in this grid can be analyzed using \emph{stair-convexity}, a new
variant of the usual notion of convexity.

We also consider weak $\eps$-nets for the diagonal of our
stretched grid in $\R^d$, $d\ge 3$, which is an ``intrinsically
$1$-dimensional'' point set. In this case we exhibit slightly
superlinear lower bounds (involving the inverse Ackermann
function), showing that the upper bounds by Alon, Kaplan,
Nivasch, Sharir, and Smorodinsky (2008) are not far from the
truth in the worst case.

Using the stretched grid we also improve the known upper bound for
the so-called \emph{second selection lemma} in the plane by a
logarithmic factor: We obtain a set $T$ of $t$ triangles with
vertices in an $n$-point set in the plane such that no point is
contained in more than   $O\bigl( t^2 / (n^3 \log \frac {n^3}t )
\bigr)$ triangles of~$T$.

\end{abstract}

\section{Introduction and statement of results}\label{sec_intro}

\paragraph{\boldmath Weak $\eps$-nets. }
Let $X\subset\R^d$ be a finite set. A set $N\subseteq \R^d$ is
called a \emph{weak $\eps$-net\footnote{More precisely we
should say ``weak $\eps$-net for $X$ \emph{with respect to
convex sets}'', since later on we will also consider
$\eps$-nets with respect to another family of sets. But since
weak $\eps$-nets w.r.t.\ convex sets are our main object of
interest, we reserve the term ``weak $\eps$-net'' without
further specifications for this particular case.} for $X$},
where $\eps\in (0,1]$ is a real number, if $N$ intersects every
convex set $C$ with $|X\cap C|\ge\eps|X|$. This important
notion was introduced by Haussler and Welzl \cite{HW} and later
used in several results in discrete geometry, most notably in
the spectacular proof of the Hadwiger--Debrunner
$(p,q)$-conjecture by Alon and Kleitman \cite{ak-pcs-92a}. We
refer to Matou\v{s}ek \cite[Chap.~10]{matou_DG} for wider
background on weak $\eps$-nets and the related notion of
(strong) $\eps$-nets, and to Alon, Kalai, Matou\v{s}ek, and
Meshulam \cite{AKMM01} for a study of weak $\eps$-nets in an
abstract setting.

In this paper, instead of $\eps$, we will often use the parameter
$r:=\frac 1\eps\ge 1$ and thus speak of weak $\frac1r$-nets.

Let $f(X,r)$ denote the minimum cardinality of a weak $\frac1r$-net
for $X$. It is a nontrivial fact, first proved by Alon, B\'ar\'any,
F\"uredi, and Kleitman \cite{ABFK}, that
\begin{equation*}
f(d,r) := \sup\{f(X,r):X\subset\R^d\textrm{ finite}\}
\end{equation*}
is finite for every $d\ge 1$ and every $r\ge 1$; that is, for every
set $X$ there exist weak $\eps$-nets of size bounded solely in terms
of $d$ and $\eps$.

Several papers were devoted to estimating the order of magnitude of
$f(d,r)$. For $d=2$, the best upper bound is $f(2,r)=O(r^2)$
\cite{ABFK} (also see \cite{CEGGSW} for another proof), and for
every fixed $d\ge 3$ it is known that $f(d,r)=O(r^d (\log r)^{c_d})$
for some constant $c_d$ (Chazelle, Edelsbrunner, Grigni, Guibas,
Sharir, and Welzl \cite{CEGGSW}; also see Matou\v{s}ek and Wagner
\cite{MW} for a simpler proof).

The only known nontrivial lower bound for $f(d,r)$ asserts that
$f(d,50)=\Omega(\exp(\sqrt{d/2}))$ \cite{matou_lower}. It concerns
the dependence of $f(d,r)$ on $d$, and no lower bound, except for
the obvious estimate $f(d,r)\ge r$, has been known for $d$ fixed and
$r$ large. Our main result is a superlinear lower bound for every
fixed $d$.

\begin{theorem}\label{t:wen-lwb}
Let $d\ge 2$ be fixed. Then for  every $r\ge 1$ there exists a
finite set $\Gs\subset\R^d$ (a \emph{stretched grid}) such that
\begin{equation*}
f(\Gs,r)=\Omega(r\log^{d-1} r).
\end{equation*}
\end{theorem}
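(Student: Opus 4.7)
The plan is to use the rapidly-growing coordinates of $\Gs$ to replace ordinary convexity by a combinatorial variant---\emph{stair-convexity}---on a uniform grid, and then to exhibit a family of $\Omega(r\log^{d-1} r)$ stair-convex witness sets that any weak $\frac{1}{r}$-net must simultaneously pierce. For the reduction, I would take $\Gs := S^d$ with $S = (s_1,\dots,s_\N) \subset \R$ increasing so rapidly that each gap $s_{i+1}-s_i$ dominates $s_i - s_1$, and use the coordinatewise rescaling $\pi\colon s_i \mapsto i/\N$ to push $\Gs$ forward to the uniform grid $\z := \{1/\N,\ldots,1\}^d \subset \I^d$. Call $T \subset \I^d$ \emph{stair-convex} if, for every pair $p \le q$ of points in $T$ (ordered coordinatewise), the monotone axis-aligned ``stair path'' from $p$ to $q$ (ascending first in the last coordinate, then the next-to-last, and so on) lies in $T$. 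A preparatory lemma---established by showing that on the stretched scale every Euclidean chord essentially coincides with the corresponding stair path---would assert that, for $S$ growing fast enough, the traces $\pi(C \cap \Gs)$ with $C$ convex in $\R^d$ coincide with the traces $T \cap \z$ with $T$ stair-convex in $\I^d$. This reduces the theorem to the analogous lower bound for weak $\frac{1}{r}$-nets against stair-convex sets on $\z$.

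For the combinatorial lower bound I would construct a family $\FF$ of stair-convex subsets of $\I^d$, each containing at least $|\z|/r$ points of $\z$, indexed by $(d-1)$-tuples of dyadic scales $\mathbf k = (k_1,\dots,k_{d-1})$ with $0 \le k_i \le \log_2 r$ and $\sum k_i$ fixed in a suitable range (giving $\Theta(\log^{d-1} r)$ choices) together with a location-index $j$ contributing $\Theta(r)$ values per $\mathbf k$. Each $T_{\mathbf k, j}$ would be a thin staircase ``slab'' of aspect ratio $2^{-k_1}\!:\!\cdots\!:\!2^{-k_{d-1}}\!:\!1$, cut out as a difference of two nested staircase regions so that stair-convexity is immediate; the slabs at a fixed $\mathbf k$ would tile $\I^d$ essentially disjointly and each contain the prescribed number of grid points, giving $|\FF| = \Theta(r\log^{d-1} r)$.

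The desired bound $|N| = \Omega(r\log^{d-1} r)$ would then follow from a fractional covering argument: one assigns positive weights $w_{\mathbf k,j}$ to the slabs so that $\sum_{T \ni x} w_T = O(1)$ for every $x \in \I^d$ while $\sum_T w_T = \Omega(r\log^{d-1}r)$, and linear-programming duality forces $|N| \ge \sum_T w_T$. The main obstacle is precisely this weighted-ply bound: the crude unweighted ply argument is doomed, because the total Lebesgue measure of the slabs in $\FF$ is already $\Omega(\log^{d-1} r)$, so some $x \in \I^d$ lies in $\Omega(\log^{d-1} r)$ slabs and the unweighted bound yields only $|N| = \Omega(r)$. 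Recovering the extra logarithmic factor requires leveraging the asymmetric, monotone geometry of stair-convex slabs across different scale-tuples---for instance by charging each slab to a distinctive corner-subregion of its base box that few other slabs intrude upon, a phenomenon with no analog for ordinary convex witnesses. A secondary, more routine technical burden is making the reduction quantitative, amounting to choosing the growth rate of $S$ large enough that every relevant convex set admits a sharp staircase approximation on $\Gs$.
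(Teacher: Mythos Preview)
Your reduction to stair-convexity is in the right spirit and matches the paper's strategy (Lemma~\ref{l:transfer} and Proposition~\ref{p:rothlike}), though the paper uses distinct growth rates in the $d$ coordinates and passes to volumes on $\I^d$ rather than grid counts. The serious problem is the proposed lower-bound mechanism. Your fractional-matching/LP-duality plan cannot yield more than $r$: if every $T\in\FF$ has $\vol(T)\ge 1/r$ (which is forced, up to negligible boundary error, by your requirement $|T\cap\z|\ge |\z|/r$) and $\sum_{T\ni x} w_T\le C$ for all $x\in\I^d$, then integrating over $\I^d$ gives
\[
\frac{1}{r}\sum_T w_T \;\le\; \sum_T w_T\,\vol(T) \;=\; \int_{\I^d}\sum_{T\ni x} w_T\,dx \;\le\; C,
\]
hence $\sum_T w_T\le Cr$. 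So no choice of weights can make $\sum_T w_T=\Omega(r\log^{d-1}r)$ while keeping the ply bounded; the ``main obstacle'' you flag is not a technical hurdle but an impossibility. The extra $\log^{d-1}r$ factor is precisely an integrality gap between fractional and integer transversals for stair-convex sets, and LP duality is blind to it.

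The paper sidesteps this by \emph{not} building a fractional matching. Given a candidate net $N$ of size $n$, it constructs a \emph{single} stair-convex witness $S\subset\I^d$ of volume $\Omega\bigl((\log^{d-1}n)/n\bigr)$ avoiding $N$---a Roth-style argument. Concretely, it anchors a ``fan'' of $\Theta(\log^{d-1}n)$ dyadic boxes, one per shape type $(t_1,\dots,t_d)$ with $\sum t_i$ fixed, at a common upper-right corner $p$; a first-moment computation shows some $p=p_0$ leaves at least half of these boxes empty of $N$, and the shared corner makes their union stair-convex. Your ``distinctive corner-subregion'' intuition is exactly the lower-subbox disjointness the paper uses to lower-bound $\vol(S)$, but it is applied \emph{after} fixing $N$ and choosing $p_0$ adaptively, not as a ply bound in an LP relaxation. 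That adaptivity is what buys the logarithmic factors.
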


\paragraph{The stretched grid. }
The stretched grid $\Gs$ in the theorem is the Cartesian product
$X_1\times X_2\times \cdots\times X_d$, where each $X_i$ is a
suitable set of $m$ real numbers. The integer $m$ is a parameter of
the construction of $\Gs$, so we sometimes write $\Gs=\Gs(m)$, and
$m$ has to be chosen sufficiently large in terms of $r$ and~$d$ in
the proof of Theorem~\ref{t:wen-lwb}.

The main idea in the construction of $\Gs$ is that
$X_2,X_3,\ldots,X_d$ are ``fast-growing'' sequences, and each $X_i$
grows much faster than $X_{i-1}$. For technical reasons, we will not
define $\Gs(m)$ uniquely; rather, we will introduce some condition
that the $X_i$ have to satisfy, and thus, formally speaking,
$\Gs(m)$ will stand for a whole class of sets. To simplify
calculations, we will also require $X_1$ to grow quickly.

We will define the $X_i$ by induction on $i$, together with
relations $\myll i$ on $\R$, which describe ``at least how fast''
the terms in $X_i$ must grow (but we will also use $\myll i$ for
comparing real numbers other than the members of $X_i$). Let us
write $X_i=\{x_{i1},x_{i2},\ldots,x_{im}\}$, where
$x_{i1}<x_{i2}<\cdots< x_{im}$.

We start by letting $x \myll 1 y$ to mean $K_1 x \le y$, where $K_1
= 2^d$. Then we choose $X_1$ so that $x_{11}=1$ and $x_{11}\myll 1
x_{12}\myll 1 \cdots\myll 1 x_{1m}$.

Having defined $X_{i-1}$ and $\myll {i-1}$, we set $K_i:= 2^d
x_{(i-1)m}$, we define $x\myll i y$ to mean $K_i x\le y$, and we
choose $X_i$ so that $x_{i1}=1$ and $x_{i1}\myll i x_{i2}\myll
i\cdots \myll i x_{im}$.

This construction develops further an idea from our earlier paper
\cite{BMN_fsl}. As we will explain, the intersections of convex sets
with the stretched grid can be approximated, up to a small error, by
sets that have a simple, essentially combinatorial description.

It is practically impossible to make a realistic drawing of the
stretched grid, but we can conveniently think about it using a
bijection with a uniform (equally spaced) grid. Namely, we define
the \emph{uniform grid} in the unit cube $\I^d$ by
\begin{equation*}
\Gu=\Gu(m):=\bigl\{\textstyle 0, {1\over m-1}, {2\over m-1}, \ldots,
{m-1\over m-1}\bigr\}^d.
\end{equation*}
Let $\B:=[1,x_{1m}]\times[1,x_{2m}]\times\cdots\times
[1,x_{dm}]$ be the bounding box of $\Gs$, and let $\pi\colon
\B\to \I^d$ be a bijection that maps $\Gs$ onto $\Gu$ and
preserves ordering in each coordinate (that is, we map points
of  $\Gs$ to the corresponding points of $\Gu$ and we squeeze
the ``elementary boxes'' of $\Gs$ onto the corresponding
elementary boxes of $\Gu$).

Fig.~\ref{fig_grid_hull} shows, for $d=2$, the image under $\pi$ of
two straight segments connecting grid points (left) and of a
``generic'' convex set (right). The image of the straight segment
$ab$, for example, first ascends almost vertically almost to the
level of $b$, and then it continues almost horizontally towards $b$.
This motivates the following notions.

\begin{figure*}
\centerline{\includegraphics{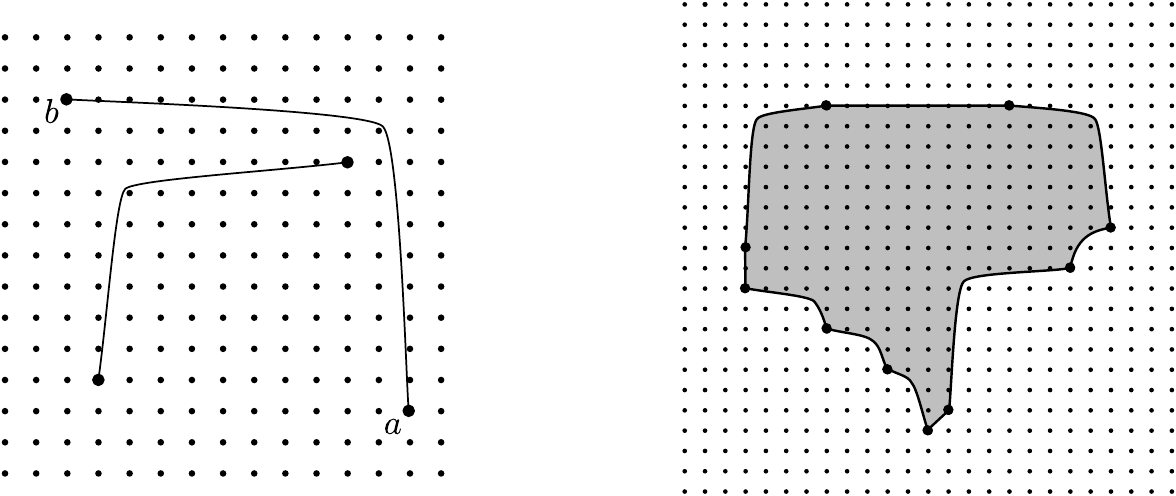}}
 \caption{\label{fig_grid_hull}
The bijection transforming the stretched grid to the uniform grid:
the images of two straight segments connecting grid points (left),
and the image of a convex set---the convex hull of the points marked
bold (right).}
\end{figure*}

\paragraph{Stair-convexity. }
First we define, for points $a=(a_1,a_2,\ldots,a_d)$ and
$b=(b_1,b_2,\ldots,b_d)\in\R^d$, the \emph{stair-path}
$\sigma(a,b)$. It is a polygonal path connecting $a$ and $b$ and
consisting of at most $d$ closed line segments, each parallel to one
of the coordinate axes. The definition goes by induction on $d$; for
$d=1$, $\sigma(a,b)$ is simply the segment $ab$. For $d\ge 2$, after
possibly interchanging $a$ and $b$, let us assume $a_d\le b_d$. We
set $a':=(a_1,a_2,\ldots,a_{d-1},b_d)$ and we let $\sigma(a,b)$ be
the union of the segment $aa'$ and of the stair-path $\sigma(a',b)$;
for the latter we use the recursive definition after ``forgetting''
the (common) last coordinate of $a'$ and $b$. See
Fig.~\ref{fig_stair-path} for examples.

\begin{figure*}
\centerline{\includegraphics{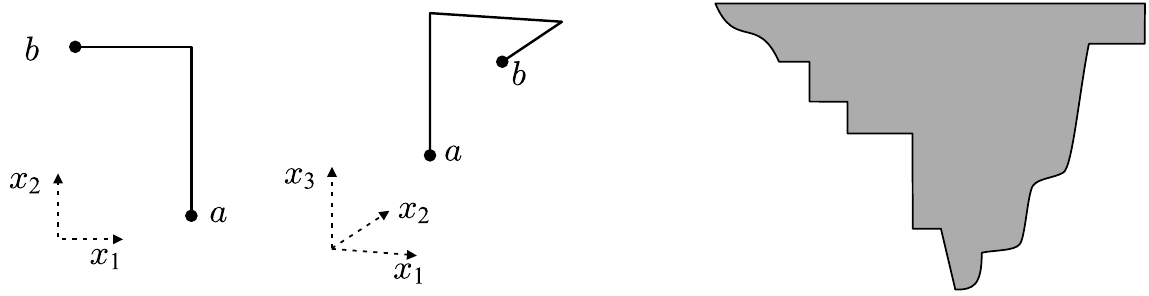}}
 \caption{\label{fig_stair-path} Examples of a stair-path in
the plane (left) and in $3$-space (center). An example of a
stair-convex set in the plane (right).}
\end{figure*}

Now we define a set $S\subseteq\R^d$ to be \emph{stair-convex} if
for every $a,b\in S$ we have $\sigma(a,b)\subseteq S$. See
Fig.~\ref{fig_stair-path} again.\footnote{Readers familiar with
abstract convex spaces might notice that a $d$-fold cone over the
one-element convex structure is almost the same as the family of
stair-convex subsets of $\I^d$. See Van de Vel
\cite[p.~32]{vandevel} for the definitions. We also note that any
line parallel to a coordinate axis intersects a stair-convex set in
a (possibly empty) segment. Sets with this latter property are
called \emph{rectilinearly convex}, \emph{orthoconvex}, or
\emph{separately convex} in various sources; however,
stair-convexity is a considerably stronger property. Another notion
somewhat resembling stair-convexity are the \emph{staircase
connected sets} studied by Magazanik and
Perles~\cite{staircase_connected}.}

Since the intersection of stair-convex sets is obviously
stair-convex, we can also define the \emph{stair-convex hull}
$\sconv(X)$ of a set $X\subseteq \R^d$ as the intersection of all
stair-convex sets containing~$X$.

As Fig.~\ref{fig_grid_hull} indicates, convex sets in the stretched
grid transform to ``almost'' stair-convex sets. We will now express
this connection formally.

\paragraph{\boldmath Epsilon-nets for stair-convex
sets and a transference lemma. }  Let us call a set $N\subseteq
\I^d$ an \emph{$\eps$-net for $\I^d$ with respect to
stair-convex sets}\footnote{In order to put this notion, as
well as weak $\eps$-nets introduced earlier, into a wider
context, we recall the following general definitions,
essentially due to Haussler and Welzl. Let $Y$ be a set, let
$\FF\subseteq 2^Y$ be a system of subsets of $Y$, and let $\mu$
be a finite measure on $Y$ such that all $F\in\FF$ are
measurable. A set $N\subseteq Y$ is a \emph{weak $\eps$-net for
$(Y,\FF)$ w.r.t.\ $\mu$} if $N\cap F\ne\emptyset$ for all
$F\in\FF$ with $\mu(F)\ge \eps\mu(Y)$. It is an $\eps$-net for
$(Y,\FF)$ w.r.t.\ $\mu$ if, moreover, $N$ is contained in the
support of~$\mu$. } if $N\cap S\ne\emptyset$ for every
stair-convex $S\subseteq \I^d$ with $\vol(S)\ge\eps$ (where
$\vol(\cdot)$ denotes the $d$-dimensional Lebesgue measure on
$\I^d$).

\begin{lemma}[Transference for weak $\eps$-nets]
 \label{l:transfer}\ \\[-5mm]
\begin{enumerate}
\item[\rm(i)]
Let $N$ be a weak $\eps$-net (w.r.t.\ convex sets) for the
$d$-dimensional stretched grid $\Gs = \Gs(m)$ of side $m$. Then the
set $\pi(N)\subseteq \I^d$ is an $\eps'$-net for $\I^d$ w.r.t.\
stair-convex sets with $\eps' \le \eps + O(|N|/m)$ (with the
constant of proportionality depending on $d$).
\item[\rm(ii)]
Let $N$ be an $\eps$-net for $\I^d$ w.r.t.\  stair-convex sets. Then
$\pi^{-1}(N)$ is a weak $\eps'$-net (w.r.t.\ convex sets) for
$\Gs(m)$ with $\eps' \le \eps + O(|N|/m)$, again with the constant
of proportionality depending on~$d$.
\end{enumerate}
\end{lemma}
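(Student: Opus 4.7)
The plan is to use the coordinate-preserving bijection $\pi$ to transport between convex and stair-convex sets, with careful accounting of the small boundary errors introduced at the grid scale. The cornerstone is an \emph{approximation dictionary}: for every $Y\subseteq\Gs$, $\sconv(\pi(Y))$ and $\pi(\conv(Y)\cap\B)$ coincide up to an $O(1/m)$-thick boundary layer in $\I^d$, as illustrated in Fig.~\ref{fig_grid_hull}. This is a consequence of the rapid growth prescribed by the relations $\myll i$; the constants $K_i=2^d x_{(i-1)m}$ are calibrated so that, in uniform coordinates, any supporting hyperplane of a convex hull of grid points traces out a stair-path to within a single grid cell.

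For part~(i), I would argue by contrapositive: given a stair-convex $S\subseteq\I^d$ with $\vol(S)\ge\eps'$ and $\pi(N)\cap S=\emptyset$, I would produce a convex $C$ with $|C\cap\Gs|\ge(\eps'-O(|N|/m))|\Gs|$ and $C\cap N=\emptyset$, contradicting the weak-$\eps$-net hypothesis with $\eps:=\eps'-O(|N|/m)$. The candidate is $C:=\conv(\pi^{-1}(S')\cap\Gs)$, where $S'\subseteq S$ is a stair-convex subset carved out to stay at distance $\Omega(1/m)$ from each point of $\pi(N)$. Because stair-paths are monotone in every coordinate, the intersection of a stair-convex set with an axis-parallel half-space is again stair-convex, so $S'$ is obtained from $S$ by trimming an axis-parallel slab of volume $O(1/m)$ for each of the $|N|$ points, giving $\vol(S')\ge\vol(S)-O(|N|/m)$. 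The dictionary then places $\pi(C)$ inside a one-cell enlargement of $S'$, keeping it disjoint from $\pi(N)$, while grid-point counting yields $|C\cap\Gs|\ge(\vol(S')-O(1/m))|\Gs|$ as required.

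Part~(ii) is dual. Given convex $C\subseteq\B$ (after truncation) with $|C\cap\Gs|\ge\eps'|\Gs|$ and $C\cap\pi^{-1}(N)=\emptyset$, set $S:=\sconv(\pi(C\cap\Gs))$. The dictionary says that $S$ sits inside an $O(1/m)$-neighborhood of $\pi(C)$, and since the latter avoids $N$, any $N$-point inside $S$ must lie in the boundary layer of $S$. Trim these boundary violations by axis-parallel slabs as in part~(i), at total cost $O(|N|/m)$, to obtain a stair-convex $S'\subseteq S$ disjoint from $N$ with $\vol(S')\ge\eps'-O(|N|/m)$. Since $N$ is an $\eps$-net for stair-convex sets and $S'\cap N=\emptyset$, we conclude $\eps'-O(|N|/m)\le\vol(S')<\eps$.

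The primary technical obstacle is proving the dictionary quantitatively: for every convex hull of grid points, the $\pi$-image must be sandwiched between stair-convex sets differing only in an $O(1/m)$-thick boundary layer. This comes down to a careful analysis of how supporting hyperplanes of such convex hulls interact with the ``columns'' of $\Gs$, and is exactly where the growth rates $K_i$ enter decisively. A secondary point is verifying that axis-parallel slab trimming really succeeds in separating $N$-points from $S$ (respectively, $\pi(N)$-points from $S$) at $O(1/m)$-volume cost per point, which again leans on the fact that stair-convexity is preserved under intersection with axis-aligned half-spaces.
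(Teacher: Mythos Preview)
Your overall architecture is right, but the ``approximation dictionary'' as you state it is false, and this is not a minor wording issue---it is the heart of the matter. The symmetric difference between $\pi(\conv(Y)\cap\B)$ and $\sconv(\pi(Y))$ is \emph{not} confined to an $O(1/m)$-thick boundary layer of either set. What Lemma~\ref{lemma_conv_sconv} actually gives (with $|Q|=1$) is that a point $q$ lies in $\conv(Y)$ iff it lies in $\sconv(Y)$, \emph{provided $q$ is far apart from every point of $Y$}. Thus the error region is contained in the union of the $d|Y|$ axis-parallel slabs of width $O(1/m)$ passing through the coordinates of the points of $\pi(Y)$. When $Y$ has $\Theta(\eps m^d)$ points, these slabs cover essentially all of $\I^d$, so the dictionary as you phrase it says nothing.

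The fix, and what the paper does, is to use the conv/sconv comparison only at the $|N|$ points of the net, and to make \emph{those} points far apart from the grid points under consideration. Concretely, for part~(i): declare $p\in\Gs$ \emph{bad} if it fails to be far apart from some $x\in N$ (at most $2d|N|m^{d-1}$ such points), and let $P^*$ be the good grid points whose $\pi$-images lie in $S$. If some $x\in N$ were in $\conv(P^*)$ then, since $x$ is far apart from $P^*$, Lemma~\ref{lemma_conv_sconv} gives $x\in\sconv(P^*)$, hence $\pi(x)\in\sconv(\pi(P^*))\subseteq S$ (using only that $S$ itself is stair-convex), contradicting $\pi(N)\cap S=\emptyset$. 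Note two things: (a) you never need a trimmed stair-convex $S'$---the original $S$ suffices for the containment $\sconv(\pi(P^*))\subseteq S$; and (b) your proposed ``trim a slab per point'' step is broken anyway, since removing a slab is not intersecting with a half-space and does \emph{not} preserve stair-convexity. Part~(ii) goes the same way with the roles of $\conv$ and $\sconv$ reversed; arguing directly (rather than by contrapositive) avoids having to manufacture a large stair-convex set disjoint from $N$, which your trimming would not reliably produce.
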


The proof is based mainly on the next two lemmas, which will be
useful elsewhere as well. The first one is a local characterization
of the stair-convex hull.

Let $a = (a_1, \ldots, a_d)$ be a point in $\R^d$. We say that
another point $b = (b_1, \ldots, b_d) \in \R^d$ has \emph{type $0$
with respect to $a$} if $b_i \le a_i$ for every $i=1,2,\ldots,d$.
For $j\in\{1,2,\ldots,d\}$ we say that $b$ has \emph{type $j$ with
respect to $a$} if $b_j \ge a_j$ but $b_i\le a_i$ for all $i =
j+1,\ldots,d$. (It may happen that $b$ has more than one type with
respect to $a$, but only if some of the above inequalities are
equalities.)

\begin{lemma}\label{lemma_Carath_stair}
Let $X \subseteq \R^d$ be a point set, and let $x\in\R^d$ be a
point. Then $x\in\sconv(X)$ if and only if $X$ contains a point
of type $j$ with respect to $x$ for every $j=0,1,\ldots,d$.
\end{lemma}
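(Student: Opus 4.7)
The plan is to induct on $d$. The base case $d=1$ is immediate: stair-convexity agrees with ordinary convexity, so $x \in \sconv(X) = \conv(X)$ iff $X$ contains both a point $\le x$ (type $0$) and a point $\ge x$ (type $1$). For the inductive step, set
\[T(X) := \{x \in \R^d : X \text{ has a point of type } j \text{ w.r.t.\ } x \text{ for every } j=0,1,\ldots,d\},\]
and prove $T(X) = \sconv(X)$.

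\emph{Inclusion $\sconv(X) \subseteq T(X)$.} Every $p \in X$ is of every type with respect to itself, so $X \subseteq T(X)$ trivially; it suffices to show $T(X)$ is stair-convex. Take $x,y \in T(X)$ with $x_d \le y_d$ and $z \in \sigma(x,y)$. If $z$ lies on the initial vertical segment from $x$ to $x' := (x_1, \ldots, x_{d-1}, y_d)$, then the witnesses of types $j < d$ for $x$ still work for $z$ (they have last coordinate $\le x_d \le z_d$, and for the other coordinates only the indices $i > j$ are constrained and these match $z$'s), while a type-$d$ witness for $y$ serves $z$. If instead $z$ lies on $\sigma(x',y)$ inside the hyperplane $\{w_d = y_d\}$, restrict to $X^- := \{p \in X : p_d \le y_d\}$ and apply the projection $\pi$ dropping the last coordinate: restriction and projection preserve types $0, \ldots, d-1$, so $\pi(X^-)$ supplies all $(d-1)$-dimensional types with respect to both $\pi(x)$ and $\pi(y)$. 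By induction $\pi(x), \pi(y) \in \sconv(\pi(X^-))$, hence $\pi(z) \in \sconv(\pi(X^-))$ (since $\pi(\sigma(x',y)) = \sigma(\pi(x),\pi(y))$), and a second application of the inductive hypothesis furnishes $\pi(z)$ with witnesses of types $0,\ldots,d-1$ in $\pi(X^-)$, which pull back to witnesses for $z$ in $X^- \subseteq X$; type $d$ is once more supplied by $y$.

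\emph{Inclusion $T(X) \subseteq \sconv(X)$.} Take $x \in T(X)$, let $p^{(d)} \in X$ be the type-$d$ witness, and let $X^- := \{p \in X : p_d \le x_d\}$. Witnesses of types $0, \ldots, d-1$ for $x$ lie in $X^-$, and their projections supply the corresponding types for $\pi(x)$ in $\R^{d-1}$; by induction $\pi(x) \in \sconv(\pi(X^-))$. For each $q \in X^-$ the lifted point $\tilde q := (q_1,\ldots,q_{d-1},x_d)$ lies on the initial vertical segment of $\sigma(q, p^{(d)})$ (since $q_d \le x_d \le p^{(d)}_d$), so $\tilde q \in \sconv(X)$. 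Thus $\sconv(\{\tilde q : q \in X^-\}) \subseteq \sconv(X)$; this hull lies in the hyperplane $\{w_d = x_d\}$, and its projection equals $\sconv(\pi(X^-))$, which contains $\pi(x)$, so $x \in \sconv(X)$.

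The main obstacle is tracking how types and stair-paths interact under the projection $\pi$. The asymmetric definition of type $j$ (only constraining coordinates with index $>j$) is precisely what makes types behave well under dropping the last coordinate, and the lifting in the second inclusion depends crucially on having a single type-$d$ witness available to anchor every stair-path at the top.
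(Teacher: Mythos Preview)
Your proof is correct. For the backward direction ($T(X)\subseteq\sconv(X)$) you essentially inline the easy half of the paper's Lemma~\ref{lemma_sconv}: lifting each $q\in X^-$ to $\tilde q$ along the stair-path to $p^{(d)}$ is precisely how that lemma shows the vertical projection of $X\cap h^-$ into $h$ lands in $\sconv(X)$. The forward direction, however, is organized differently. The paper argues directly that $x\in\sconv(X)$ forces all types to be present, using Lemma~\ref{lemma_sconv} to identify the slice $h\cap\sconv(X)$ with a $(d-1)$-dimensional stair-convex hull and then invoking induction once. You instead prove that $T(X)$ is itself stair-convex (and contains $X$), so $\sconv(X)\subseteq T(X)$ by minimality; this avoids any auxiliary slice lemma but costs a bit more work in Case~2, where you have to apply the induction hypothesis in both directions to shuttle between $T(\pi(X^-))$ and $\sconv(\pi(X^-))$. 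The paper's route is shorter once Lemma~\ref{lemma_sconv} is available; your route is self-contained and has the pleasant side effect of showing directly that the type description defines a stair-convex set.
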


The next lemma shows that convex hulls and stair-convex hulls almost
coincide in the stretched grid. Let us say that two points
$a=(a_1,\ldots,a_d)$ and $b= (b_1,\ldots,b_d)$ in $\B$ are \emph{far
apart} if, for every $i=1,2,\ldots,d$, we have either $a_i\myll i
b_i$ or $b_i\myll i a_i$. We also extend
this notion to sets; $P,Q\subseteq \R^d$ are far apart
if each $p\in P$ is far apart from each~$q\in Q$.

\begin{lemma}\label{lemma_conv_sconv}
Let $P$ and $Q$ be sets in $\B$ that are far apart.
Then $\sconv(P)\cap\sconv(Q)\neq
\emptyset$ if and only if $\conv(P)\cap\conv(Q)\neq\emptyset$.
\end{lemma}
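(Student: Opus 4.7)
I would prove Lemma~\ref{lemma_conv_sconv} by induction on the dimension~$d$, using Lemma~\ref{lemma_Carath_stair} as the main tool to translate stair-convex intersection into the combinatorial condition that both sets admit type-$j$ witnesses with respect to a common point. The base case $d=1$ is immediate, since $\sigma(a,b)=[a,b]$ and stair-convexity coincides with ordinary convexity on~$\R$.

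For the inductive step, the natural strategy is to project $P$ and $Q$ onto the first $d-1$ coordinates, apply the inductive hypothesis (the projections inherit far-apartness in dimension~$d-1$), and then lift back to dimension~$d$. In the direction $\conv(P)\cap\conv(Q)\ne\emptyset \Rightarrow \sconv(P)\cap\sconv(Q)\ne\emptyset$, the lift is relatively clean: the inductively obtained $x' \in \sconv(P')\cap\sconv(Q')$ comes equipped with type-$j$ witnesses in dimension~$d-1$ via Lemma~\ref{lemma_Carath_stair}, which I would lift to preimages in $P$ and $Q$ of minimum $d$-th coordinate; choosing $x_d$ above those coordinates but below the maximum $d$-th coordinate attained on each of $P$ and $Q$ yields a witness $x=(x',x_d)$ for stair-convex intersection (again via Lemma~\ref{lemma_Carath_stair}, with a type-$d$ witness supplied by any point attaining the respective maximum). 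The far-apart gap $\myll d$ makes the admissible window for $x_d$ nonempty on both the $P$ and $Q$ sides simultaneously.

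The converse $\sconv(P)\cap\sconv(Q)\ne\emptyset \Rightarrow \conv(P)\cap\conv(Q)\ne\emptyset$ is the main obstacle. Starting from $x$ in the stair-intersection, extract type-$j$ witnesses $p_j\in P$ and $q_j\in Q$ for $j=0,1,\ldots,d$; since $p_0,\ldots,p_{d-1}$ all have $d$-th coordinate at most~$x_d$, their projections are type-$j$ witnesses in dimension~$d-1$ with respect to $x'$, and similarly for $Q$, so $x'\in\sconv(P')\cap\sconv(Q')$ and the inductive hypothesis yields some $z'\in\conv(P')\cap\conv(Q')$. Lifting $z'$ back to $\R^d$ produces a fiber of achievable $d$-th coordinates on each side, and the core difficulty is showing these fibers overlap. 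Here the far-apart gap $\myll d$ is essential: the type-$d$ witnesses $p_d,q_d$ (with $d$-th coordinate $\ge x_d$) together with the other witnesses (with $d$-th coordinate $\le x_d$) should force each fiber above an appropriately chosen $z'$ to be an interval covering~$x_d$, so the two fibers overlap near $x_d$. Executing this cleanly requires selecting $z'$ carefully inside the interior of $\conv(P')\cap\conv(Q')$ and cascading through the gaps $K_1\ll K_2\ll\cdots\ll K_d$, using each $K_i$ to absorb the $i$-th-coordinate perturbation introduced by the lift at the corresponding inductive level.
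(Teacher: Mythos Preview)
Your inductive framework (reduce to dimension $d-1$ by projecting out the last coordinate, then lift) matches the paper's at the coarsest level, but the mechanism you propose for the lift is not the one the paper uses, and as written it has a genuine gap in both directions.

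For $\conv \Rightarrow \sconv$: after applying the inductive hypothesis to the projections $P',Q'$ you obtain some $x'\in\sconv(P')\cap\sconv(Q')$, pick type-$j$ witnesses, lift them, and then look for an admissible $x_d$. But the only information you have retained from the hypothesis $\conv(P)\cap\conv(Q)\ne\emptyset$ is its projection $\conv(P')\cap\conv(Q')\ne\emptyset$, which is strictly weaker. Nothing now prevents, say, the unique type-$1$ witness in $Q'$ from lifting to a point of $Q$ whose $d$-th coordinate exceeds $\max_{p\in P}p_d$; then your window $[A,B]$ for $x_d$ is empty. The far-apart gap $\myll d$ by itself does not rule this out---it only forces each pair of $d$-th coordinates to be well separated, not that the lifted $Q$-witnesses sit below the top of $P$. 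The paper avoids this by first reducing (via Kirchberger) to $|P|+|Q|\le d+2$, then reading off from the actual convex combination for the intersection point a small perturbation parameter $\alpha\le p_{sd}/q_{td}$; this $\alpha$ is what encodes the $d$-th coordinate information you discarded, and it is used to show that the highest point $p_s$ of $P$ lies above $Q\setminus\{q_t\}$, which is exactly the structural fact your lift needs.

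For $\sconv \Rightarrow \conv$: you correctly identify the fiber-overlap as the crux, but ``selecting $z'$ carefully inside the interior'' is not a method. The paper's idea here is different and specific: after reducing to $|P|+|Q|=d+2$ via a stair-convex Kirchberger theorem (which also guarantees the two highest points lie in different sets), it perturbs the non-top points of $Q$ toward the top point $q_t$ by a factor $\alpha$, checks that for $\alpha\in[1/x_{dm},\,1/2x_{(d-1)m}]$ the combinatorics of the projections are unchanged, applies the inductive hypothesis to the perturbed projections, and then varies $\alpha$ continuously and uses the intermediate value theorem to match the two fibers' heights. The quantitative inequalities (\ref{conv_sconv_ineq}) are carried through the induction precisely to make the IVT argument go. None of this apparatus---the Kirchberger reductions, the ``two highest points in different sets'' fact, the $\alpha$-perturbation, or the IVT step---appears in your plan, and I do not see how to carry out the fiber-overlap argument without something playing their role.
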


In this paper we use Lemma~\ref{lemma_conv_sconv} only with $|Q| =
1$ (then it is a statement about membership of a point $q$ in
$\conv(P)$). We believe, however, that the above more general
version is interesting in its own right and potentially useful in
further applications, and thus worth expending some extra effort in
the proof. The lemma generalizes a result of \cite{BMN_fsl}, but the
proof method is different.

The proofs of Lemmas \ref{l:transfer}--\ref{lemma_conv_sconv}
are somewhat technical and can be skipped on first reading;
they appear in Section~\ref{sec_proofs}.\footnote{Also
see~\cite[sec.~2.1.3]{nivasch_phd} for a slightly different
proof of Lemma~\ref{lemma_conv_sconv} (for the case $|Q| = 1$)
from the one presented here.}

Theorem~\ref{t:wen-lwb} immediately follows from
Lemma~\ref{l:transfer}(i) and the next proposition:

\begin{prop}\label{p:rothlike}
Every $\frac 1r$-net for $\I^d$ w.r.t.\ stair-convex sets has at
least $\Omega(r\log^{d-1} r)$ points.
\end{prop}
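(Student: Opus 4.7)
My plan is to prove the proposition by induction on $d$. The base case $d=1$ is immediate: stair-convex subsets of $[0,1]$ are exactly the intervals, and the $r$ pairwise disjoint intervals $[(i-1)/r, i/r]$ of length $1/r$ force any $\frac{1}{r}$-net to contain at least $r=\Omega(r\log^{0}r)$ distinct points.

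For the inductive step ($d\ge 2$), I would exploit the following ``multi-slab staircase'' construction. Given a partition $0=b_0<b_1<\cdots<b_k=1$ and a nested chain $T_1\subseteq T_2\subseteq\cdots\subseteq T_k$ of stair-convex subsets of $[0,1]^{d-1}$, the set
\[
S=\bigcup_{i=1}^{k}T_i\times[b_{i-1},b_i]
\]
is stair-convex in $[0,1]^d$. Indeed, for $p\in T_j\times[b_{j-1},b_j]$ and $q\in T_i\times[b_{i-1},b_i]$ with $p_d\le q_d$ (hence $j\le i$), the upward $x_d$-segment of the stair-path from $p$ to $q$ passes through points $(p',z)$ for $z\in[b_{l-1},b_l]$, which lie in $T_l\times[b_{l-1},b_l]\subseteq S$ since $p'\in T_j\subseteq T_l$ by nestedness; the remaining $(d-1)$-dimensional stair-path at height $q_d$ stays inside $T_i$ by stair-convexity of $T_i$. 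The $d$-dimensional volume of $S$ equals $\sum_i\vol_{d-1}(T_i)(b_i-b_{i-1})$.

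Given a $\frac{1}{r}$-net $N\subset[0,1]^d$, write $N_i'$ for the projection onto the first $d-1$ coordinates of $N\cap([0,1]^{d-1}\times[b_{i-1},b_i])$. If $N$ misses a multi-slab staircase $S$ then $T_i\cap N_i'=\emptyset$ for every $i$. Assuming for contradiction that $|N|=o(r\log^{d-1}r)$, I would set $k=\lfloor\log r\rfloor$, use equal-width slabs, and build nested $T_i$'s greedily from outside in: $T_k$ a maximum-volume $(d-1)$-dim stair-convex set disjoint from $N_k'$, and each $T_i$ for $i<k$ a maximum-volume stair-convex subset of $T_{i+1}$ disjoint from $N_i'$. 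A suitable ``relative'' form of the inductive hypothesis, applied to $N_i'\cap T_{i+1}$ inside the ambient $T_{i+1}$, should then bound $\vol(T_i)$ in terms of $\vol(T_{i+1})$ and $|N_i'\cap T_{i+1}|$ so that the resulting recurrence yields $\sum_i\vol(T_i)(b_i-b_{i-1})\ge 1/r$, contradicting the net property.

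The main obstacle I anticipate is precisely this relative inductive step. A naive slicing that takes all $T_i$ equal to a single stair-convex set avoiding $\bigcup_i N_i'$ yields only the weaker bound $\Omega(r\log^{d-2}r)$, so the nested staircase (and a genuinely multi-scale analysis rather than a single slab-thickness) is essential for recovering the missing $\log r$ factor. However, applying the inductive hypothesis inside a general stair-convex ambient is delicate because affine maps do not preserve stair-convexity; I expect one must either restrict the greedy construction so that each ambient $T_{i+1}$ is box-shaped (where affine rescaling reduces everything to the standard inductive hypothesis) or prove a strengthened relative/scale-invariant form of the proposition as part of the induction.
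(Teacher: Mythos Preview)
Your multi-slab staircase $\bigcup_i T_i\times[b_{i-1},b_i]$ with nested stair-convex $T_i$ is indeed stair-convex, and the paper's witnessing set is of exactly this shape. But the gap you flag in the last paragraph is the whole difficulty, and the greedy scheme as stated cannot close it. A clean failure already for $d=2$: place all $n$ points in the top slab, evenly spaced in the first coordinate. Then the greedy $T_k$ has length about $1/n$, and since every $T_i\subseteq T_k$ the whole staircase has volume at most $1/n$; with $n$ just below $cr\log r$ this is far below $1/r$, even though $[0,1]\times[0,1-\tfrac1k]$ is a stair-convex set of volume nearly~$1$ avoiding $N$. More generally, greedy commits irrevocably to a small $T_k$ whenever the top slab is crowded, and no relative inductive hypothesis applied inside $T_k$ can recover from that; the same objection defeats the box-shaped variant. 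Any slab-by-slab scheme faces some version of this problem: what is missing is a way to choose the $T_i$ across all slabs jointly.

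The paper does exactly that, with no induction on $d$; it is Roth's discrepancy method. Given $|N|=n$, fix $k$ with $2^k\asymp n$ and call $t=(t_1,\ldots,t_d)$ with $t_i\ge1$ and $\sum t_i=k$ a \emph{box type}; there are $\binom{k-1}{d-1}=\Omega(k^{d-1})$ types. For $p\in[\tfrac12,1]^d$ let $B_t(p)=\prod_i[p_i-2^{-t_i},p_i]$, a box of volume $2^{-k}$. A union bound gives $\Pr[B_t(p)\cap N=\emptyset]\ge\tfrac12$ for uniform $p$, so by linearity of expectation some $p_0$ has at least half its box types empty. The union $S$ of these empty boxes is stair-convex (they share the upper-right corner $p_0$), avoids $N$, and since the lower-left $2^{-d}$-subboxes of distinct types are pairwise disjoint, $\vol(S)=\Omega(k^{d-1}2^{-k})=\Omega((\log^{d-1}n)/n)$. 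Thus all $d-1$ logarithms arrive at once from counting dyadic aspect ratios of boxes with a common, randomly chosen corner; this global coupling is precisely what the greedy, level-by-level plan lacks.
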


The proof, which we present in Section~\ref{sec_stconv_lower_upper},
is strongly inspired by Roth's beautiful lower bound in discrepancy
theory \cite{r-id-54}; also see \cite{m-gd} for a presentation of
Roth's proof and a wider context.

As we also show in Section~\ref{sec_stconv_lower_upper}, the lower
bound in the proposition is actually tight (up to a constant
factor). This means, via Lemma~\ref{l:transfer}(ii), that the
stretched grid itself is not going to provide any stronger lower
bounds for weak $\eps$-nets than those proved here.

\paragraph{\boldmath
Weak $\eps$-nets for ``$1$-dimensional'' sets. } The smallest
possible size of weak $\eps$-nets has also been investigated for
special classes of sets \cite{CEGGSW,BC_sphere,MW,interval_chains}.

For us, two results of Alon, Kaplan, Nivasch, Sharir, and
Smorodinsky \cite{interval_chains} (see
also~\cite{nivasch_phd}) are particularly relevant. First,
improving on earlier results  by Chazelle et al.~\cite{CEGGSW},
they proved that for every planar finite set $X$ \emph{in
convex position} we have $f(X,r)= O(r\alpha(r))$, where
$\alpha$ denotes the inverse Ackermann function (we recall that
$f(X,r)$ is the smallest possible size of a weak $\frac1r$-net
for $X$). This, together with our Theorem~\ref{t:wen-lwb},
shows that the worst case for weak $\eps$-nets in the plane
does \emph{not} occur for sets in convex position.

Second, Alon et al.~\cite{interval_chains}, improving on
\cite{MW}, also showed that if $\gamma$ is a curve in $\R^d$
that intersects every hyperplane in at most $k$ points, where
$d$ and $k\ge d$ are considered constant, then every finite
$X\subset \gamma$ has weak $\frac1r$-nets of size almost linear
in $r$.\footnote{A curve in $\R^d$ that intersects every
hyperplane in at most $d$ points is called a \emph{convex
curve} in some sources, e.g. \v
Zivaljevi\'c~\cite[p.~314]{zival_handbook}.} We won't recall
the precise formulas, which are somewhat complicated; we just
state that the size can be bounded by $r\cdot
2^{C\alpha(r)^b}$, where $C$ and $b$ depend only on $d$
and~$k$.

We will show that for $d\ge 3$, point sets on a curve $\gamma$ as
above (with $k=d$) indeed require weak $\frac1r$-nets of size
superlinear in $r$ in the worst case, and the form of our lower
bound is actually similar to the just mentioned upper bounds, only
with smaller values of~$b$.

This time the point set is the \emph{diagonal} $\diag$ of the
$d$-dimensional stretched grid $\Gs$. That is, with
$\Gs(n)=X_1\times\cdots\times X_d$, where
$X_i=\{x_{i1},\ldots,x_{in}\}$, we set
$\diag(n):=\{(x_{1j},\ldots,x_{dj}):j=1,2,\ldots,n\}$ (this set
appeared already in \cite{BMN_fsl}, although there it was defined
slightly differently).

\begin{theorem}\label{t:alphas}
For $d\ge 3$ fixed, let us put $t:=\lfloor d/2\rfloor -1$, and
let us define a function $\beta_d$ by
\begin{equation*}
\beta_d(r) := \begin{cases}
\,\,\,\frac1{t!}\alpha(r)^t & \text{for $d$ even};\\
\,\,\,\frac1{t!}\alpha(r)^t \log_2\alpha(r) & \text{for $d$ odd}.
\end{cases}
\end{equation*}
\begin{enumerate}
\item[\rm(i)] {\rm (Lower bound) }
For every $r\ge 1$ there exists $n_0=n_0(r)$ such that for all $n\ge
n_0$
\begin{equation*}
f(\diag(n),r)\ge r\cdot 2^{(1-o(1))\beta_d(r)},
\end{equation*}
where $o(\cdot)$ refers to $r\to\infty$ and the $o(1)$ term
has the form $O(\alpha(r)^{-1})$ for $d$ even and
$O((\log_2\alpha(r))^{-1})$ for $d$ odd. (In particular,
for $d=3$ the lower bound is $\Omega(r\alpha(r))$.)
\item[\rm(ii)] {\rm (Upper bound) }
The lower bound from (i) is tight in the worst case up to the $o(1)$
term in the exponent. That is, $f(\diag(n),r)\le r\cdot
2^{(1+o(1))\beta_d(r)}$, with the same form of the $o(1)$  term as
in~(i).
\end{enumerate}
\end{theorem}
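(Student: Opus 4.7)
Both parts follow from transferring the problem to the uniform diagonal in $\I^d$ and reducing the resulting ``$1$-dimensional'' hitting problem to the interval-chains question studied in~\cite{interval_chains,nivasch_phd}.

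First, a straightforward adaptation of Lemma~\ref{l:transfer} to $\diag(n) \subseteq \Gs(n)$ (the proof uses Lemma~\ref{lemma_conv_sconv} in the same way, only restricted to the diagonal points) identifies weak $\frac 1r$-nets for $\diag(n)$, up to additive error $O(|N|/n)$ in $\eps$, with hitting sets in $\I^d$ for the family of stair-convex $S$ with $\vol(S) \ge \frac 1r$ that meet the uniform diagonal $D := \pi(\diag(n))$. Choosing $n \ge n_0(r)$ large enough makes this error negligible, so both parts of the theorem reduce to estimating the minimum size of such a stair-convex hitting set for $D$.

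Next, I would use Lemma~\ref{lemma_Carath_stair} to give a combinatorial description of $S \cap D$: a diagonal point $p_c$ belongs to $\sconv(P)$ iff $P$ meets each of the $d+1$ nested ``type regions'' w.r.t.\ $p_c$. Tracking these types as $c$ ranges over $\{0,\ldots,n-1\}$ encodes $S \cap D$ by a $d$-level nested interval structure on $\{0,\ldots,n-1\}$, which is essentially a depth-$d$ interval chain in the sense of~\cite{interval_chains}. The volume condition $\vol(S) \ge \frac 1r$ translates, via the fast growth of the $X_i$, into a well-separation requirement between consecutive levels of the chain in the corresponding coordinate direction.

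Given this translation, the upper bound (ii) follows by constructing a net recursively on $d$: $\Theta(r)$ equally spaced points along $D$ handle the outermost nesting level, and inside each resulting ``slab'' one recursively inserts a smaller net hitting the next level. Applying the optimal interval-chain construction of~\cite{interval_chains} yields the stated bound $r \cdot 2^{(1+o(1))\beta_d(r)}$. The lower bound (i) is proved dually: for any candidate net $N$, I would exhibit a volume-$\ge \frac 1r$ stair-convex set whose trace on $D$ realizes the hardest depth-$d$ interval chain missed by $N$, built by a Davenport--Schinzel-style recursion. The parity of $d$ controls whether each recursive step contributes a full factor of $\alpha(r)$ or only $\log_2\alpha(r)$, which is exactly what produces the two cases in the definition of~$\beta_d$.

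The main obstacle is the second step: making the passage from the geometric volume inequality $\vol(S) \ge \frac 1r$ to the combinatorial interval-chain structure precise enough that the constant in the exponent of $\beta_d$ is preserved up to a $1 \pm o(1)$ factor in both directions. Once this tight translation is established, the recursive construction and the matching lower bound run in close parallel to~\cite{interval_chains}, and what remains is inverse-Ackermann bookkeeping to verify the explicit form of $\beta_d(r)$.
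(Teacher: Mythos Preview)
Your plan has the right destination --- reduce to the interval-chain stabbing problem of~\cite{interval_chains} --- but the route is wrong in a way that would not go through. The volume formulation is the main error: a weak $\frac1r$-net for $\diag(n)$ must hit every convex set containing at least $n/r$ \emph{points of the diagonal}; there is no volume condition (the diagonal has Lebesgue measure zero), so ``stair-convex $S$ with $\vol(S)\ge\frac1r$ meeting $D$'' is not the relevant family, and no adapted transference lemma is needed or useful. The paper works directly in $\B$ via Lemmas~\ref{lemma_Carath_stair} and~\ref{lemma_conv_sconv}: a net point $x$ determines, for each coordinate $j$, an index $a_j$ recording where $x_j$ falls among the $j$-th coordinates of the diagonal, giving a $d$-tuple of integers. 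On the other side, after partitioning $\diag(n)$ into $\Theta(\ell)$ equal blocks (with $\ell=|N|$) and discarding the $O(\ell)$ blocks that are not far apart from some net point in some coordinate, any subset of $n/r$ diagonal points meets $k=\Theta(\ell/r)$ good blocks, and these blocks determine an ordinary $(k{-}1)$-interval chain on the separators. Lemmas~\ref{lemma_Carath_stair} and~\ref{lemma_conv_sconv} then say that if $x\in\conv(\diag')$, the $d$-tuple associated with $x$ stabs this chain. This yields $\ell\ge\z^{(d)}_{\Theta(\ell/r)}(\ell)$ for the lower bound, and the reverse translation gives a net of size $\z^{(d)}_{\ell/r-1}(\ell)$ for the upper bound.

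There is also no further recursion on $d$ to carry out and no ``$d$-level nested interval structure'': the parameter $d$ enters only as the tuple size $j=d$ in $\z^{(j)}_k(n)$, and the bounds of Theorems~\ref{thm_z_j_lower} and~\ref{thm_z_j_upper} --- including the even/odd dichotomy and the $\frac1{t!}\alpha(r)^t$ exponents --- are quoted as black boxes. What remains after the reduction is only choosing the free parameter $\ell$ and a short computation with the inverse Ackermann hierarchy (Lemma~\ref{lemma_alpha_alpha_3}) to reach the form $r\cdot 2^{(1\pm o(1))\beta_d(r)}$.
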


This theorem is proved in Section~\ref{sec_alpha}; the proof relies
essentially on tools from \cite{interval_chains}. In that section we
will also check that, with a suitable choice of the stretched grid,
the set $\diag$ is contained in a curve intersecting every
hyperplane at most $d$ times.

We find it quite fascinating that the bounds in the theorem are also
identical to the current best upper bounds for a seemingly unrelated
problem: the maximum possible length of \emph{Davenport--Schinzel
sequences}~\cite{DS_yo}.

\paragraph{``Thin'' sets of triangles. }
Let $X$ be an $n$-point set in the plane, and let $T$ be a family of
$t$ triangles with vertices at the points of~$X$. B\'ar\'any,
F\"uredi, and Lov\'asz \cite{bfl-nhp-90} were the first to prove a
statement of the following kind: If $T$ has ``many'' triangles, then
there is a point contained in a ``considerable number'' of triangles
of $T$. (This kind of statement is called a \emph{second selection
lemma} in \cite{matou_DG}. B\'ar\'any et al.\ used it in their proof
of the first nontrivial upper bound in the so-called \emph{$k$-set
problem} in dimension~3, and their work inspired many further
exciting results such as the \emph{colored Tverberg theorem}; see,
e.g., \cite{matou_DG} for background.) The current best quantitative
version is this: There exists a point contained in at least
$\Omega\bigl( t^3 / (n^6 \log^2 n) \bigr)$ triangles of~$T$ (Nivasch
and Sharir \cite{NS}, fixing a proof of Eppstein \cite{eppstein}).

It is not hard to see that this lower bound cannot be improved
beyond $O(t^2/n^3)$. Indeed Eppstein \cite{eppstein} showed that for
\emph{every} $n$-point set $X\subset \R^2$ and for all $t$ between
$n^2$ and $n\choose 3$ there is a set of $t$ triangles with vertices
in $X$ such that no point lies in more than $O(t^2/n^3)$ triangles
of~$T$.

Here we provide the first (slight) improvement of this easy bound,
again using the stretched grid.

\begin{theorem}\label{t:2ndsel}
Let $n=m^2$. Then for all $t$ ranging from $n^{2.5}\log n$ to
$n\choose 3$ there exists a set of $t$ triangles on the stretched
grid $\Gs(m)$ such that no point lies in more than
\begin{equation*}
O\Bigl(\frac{t^2}{n^3\log (n^3/t)}\Bigr)
\end{equation*}
triangles of~$T$. (In particular, if $t<n^{3-\delta}$ for some
constant $\delta>0$, then the bound is $O(t^2/(n^3\log n))$.)
\end{theorem}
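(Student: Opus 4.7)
The plan is to reduce to the uniform grid via $\pi$ and Lemma~\ref{lemma_conv_sconv}, and then carry out a direct double-counting on stair-convex hulls. For a non-degenerate triple $\{a,b,c\}\subset\Gs(m)$ (no two of the six coordinates equal), all pairs are automatically far apart by the growth condition defining $\myll{i}$, and a generic test point $q\in\R^2$ is also far apart from each vertex. So by Lemma~\ref{lemma_conv_sconv}, $q\in\conv\{a,b,c\}$ iff $\pi(q)\in\sconv\{\pi(a),\pi(b),\pi(c)\}$. Since max depth is attained in the interior of some arrangement cell, it suffices to construct $t$ non-degenerate triples whose stair-convex hulls in $\Gu(m)$ cover each point of $\I^2$ at most $O(t^2/(n^3\log(n^3/t)))$ times. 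The $O(n^{5/2})$ degenerate triples (sharing some coordinate) are negligible in the range $t\ge n^{5/2}\log n$.

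From Lemma~\ref{lemma_Carath_stair} one reads off that, for a non-degenerate triple, $\sconv\{a,b,c\}$ is the grid-aligned rectangle $R=[\ell_x,r_x]\times[\max(\ell_y,r_y),t_y]$, where $t$ is the unique topmost point and $\ell,r$ are the other two sorted by $x$. Writing $R=[i_1,i_2]\times[j_1,j_2]$ in grid coordinates $\{0,1,\ldots,m-1\}$, exactly $c(R)=m(2j_1+1)$ triples realize $R$: $m$ free choices for the top point's first coordinate, and $2j_1+1$ pairs $(\ell_y,r_y)\in\{0,\ldots,j_1\}^2$ with $\max(\ell_y,r_y)=j_1$.

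I take $T$ to be the set of all non-degenerate triples whose stair-convex hull has area at most a threshold $A$, to be chosen. Setting $w'=i_2-i_1$, $h'=j_2-j_1$, and $L=A(m-1)^2\asymp An$, direct double counting gives
\begin{equation*}
 |T| \;=\; \sum_{w'h'\le L} m(m-w')(m-h')^2 \;\asymp\; n^2\cdot L\log\tfrac{m^2}{L} \;\asymp\; n^3 A\log\tfrac{1}{A},
\end{equation*}
valid once $A$ is bounded away from~$1$. Choosing $A\asymp t/(n^3\log(n^3/t))$ yields $|T|\gtrsim t$, after which $T$ is pared down to exactly $t$ triples. For a point $p\in\I^2$ with both coordinates $\Theta(1)$, the depth is
\begin{equation*}
 \mathrm{depth}(p) \;=\! \sum_{R\ni p,\,\mathrm{area}(R)\le A}\! c(R) \;\asymp\; n\sum_{w'h'\le L} w'h' \;\asymp\; n\cdot L^2\log\tfrac{m^2}{L} \;=\; A^2 n^3\log\tfrac{1}{A} \;=\; O\!\left(\tfrac{t^2}{n^3\log(n^3/t)}\right),
\end{equation*}
where the middle step uses the summation estimate $\sum_{w'h'\le L}w'h'\asymp L^2\log(m^2/L)$. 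For $p$ closer to the boundary, at least one of $p_x,1-p_x,p_y,1-p_y$ shrinks and the depth is even smaller, so the maximum is indeed attained in the interior.

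The main technical obstacle will be carefully establishing the divisor-sum asymptotics $|\{(w',h')\in[1,m]^2:w'h'\le L\}|\asymp L\log(m^2/L)$ and $\sum_{w'h'\le L}w'h'\asymp L^2\log(m^2/L)$, uniformly in the relevant range $L\in[m,cn]$ (the lower cutoff $L\asymp m$ corresponds precisely to $t\asymp n^{5/2}\log n$, explaining the hypothesis of the theorem). One must also verify that the various log factors collapse correctly, so that $\log(1/A)\asymp\log(m^2/L)\asymp\log(n^3/t)$ and no stray log creeps into the final expression; and that the boundary regime where $L$ approaches $m^2$ (i.e., $t$ near $\binom{n}{3}$) is harmless, since the target bound becomes trivial there.
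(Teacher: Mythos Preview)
Your construction is essentially the paper's: both take triangles whose stair-convex hull has area below a threshold, and both use the hyperbola count $|\{(w',h'):w'h'\le L\}|\asymp L\log(m^2/L)$ to get $|T|\asymp n^3\rho\log(1/\rho)$. The paper restricts to \emph{increasing} triangles with the middle vertex $p_2$ in the central third of the grid and partitions $T$ into classes by the four dimensions $(h_{12},h_{23},v_{12},v_{23})$, observing that in each class only an $O(\rho)$-fraction of the $\Theta(n)$ triangles can contain $q$; you take all non-degenerate triples and double-count over the rectangle $R$ directly. These are cosmetic differences---the mechanism is the same.

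There is, however, a real gap in your reduction to stair-convexity. The sentence ``max depth is attained in the interior of some arrangement cell, so take $q$ generic'' does not produce a $q$ that is far apart from every vertex: the ``not far apart'' locus for a single grid column is an entire vertical strip of positive width, and such strips can swallow whole arrangement cells. So Lemma~\ref{lemma_conv_sconv} may fail for some $\Delta\in T$ at the depth-maximizing $q$, and your stair-depth bound alone does not control the true depth. (Your remark about the $O(n^{5/2})$ degenerate \emph{triples} is a separate issue and does not help here.) The paper handles this explicitly: for any $q$ there are at most two bad rows and two bad columns, and within each dimension class at most $O(m)$ triangles have a vertex there; this contributes $O(m)$ per class against $O(\rho n)$ for the main term, and is absorbed precisely when $\rho\gtrsim 1/m$, i.e.\ $t\gtrsim n^{2.5}\log n$. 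In your all-triples setup the analogous per-row/column degree bound needs its own argument (the multiplicity $c(R)\asymp mj_1$ is not uniform across rows); restricting to the middle third of the grid, as the paper does, would make this step routine.
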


This theorem is proved in Section~\ref{sec_SSL}.

\section{Epsilon-nets with respect to stair-convex sets}\label{sec_stconv_lower_upper}

Here we prove Proposition~\ref{p:rothlike}, stating that every
$\frac 1r$-net for $\I^d$ w.r.t.\ stair-convex sets has $\Omega(r
\log^{d-1} r)$ points. Thus, for an arbitrary set $N\subseteq \I^d$
of $n$ points, it suffices to exhibit a stair-convex set $S\subseteq
\I^d$ of volume at least $\Omega((\log^{d-1} n)/n)$ that avoids~$N$.

We will produce such an $S$ as a union of suitable axis-parallel
boxes.

Let $k=\Theta(\log n)$ be the integer with $2^{d+1}n\le 2^k< 2^{d+2}
n$, and let us call every integer vector $t=(t_1,t_2,\ldots,t_d)$
with $t_i\ge 1$ for all $i$ and with $t_1+t_2+\cdots+t_d=k$ a
\emph{box type}. For later use we record that the number $T$ of box
types is ${k-1\choose d-1}=\Omega(k^{d-1})$.

Let $V:=[\frac12,1]^d$ be the ``upper right part'' of the cube
$\I^d$. For a box type $t$ and a point $p\in V$, we define the
\emph{normal box of type $t$ anchored at $p$} as
\begin{equation*}
B_t(p) := [p_1-2^{-t_1},p_1]\times
          [p_2-2^{-t_2},p_2]\times\cdots\times
          [p_d-2^{-t_d},p_d].
\end{equation*}
Since each side of $B_t(p)$ is at most $\frac 12$, each normal box
is contained in $\I^d$.

The volume of each normal box is $2^{-k}\le 1/(2^{d+1}n)$. Let us
call a normal box $B_t(p)$ \emph{empty} if $B_t(p)\cap N=\emptyset$.
We will show that for every box type $t$ and for $p\in V$ chosen
uniformly at random, we have
\begin{equation}\label{e:emptynormal}
\Pr[B_t(p)\mbox{ is empty}]\ge{\textstyle \frac 12}.
\end{equation}
Indeed, for every point $x\in \I^d$ we have $\vol\{p\in V: x\in
B_t(p)\}\le 2^{-k}$, which in probabilistic terms means $\Pr[x\in
B_t(p)]\le 2^{-k}/\vol(V)=2^{-k+d}\le \frac 1{2n}$, and
(\ref{e:emptynormal}) follows by the union bound.

Now we define the \emph{fan} $\FF(p)$ of a point $p\in V$ as the set
consisting of the normal boxes $B_t(p)$ for all the $T$ possible box
types $t$ (see Fig.~\ref{fig_fan} left). By (\ref{e:emptynormal}) we
get that for a random $p\in V$ the expected number of empty boxes in
the fan of $p$ is at least $T/2$.

\begin{figure}
\centerline{\includegraphics{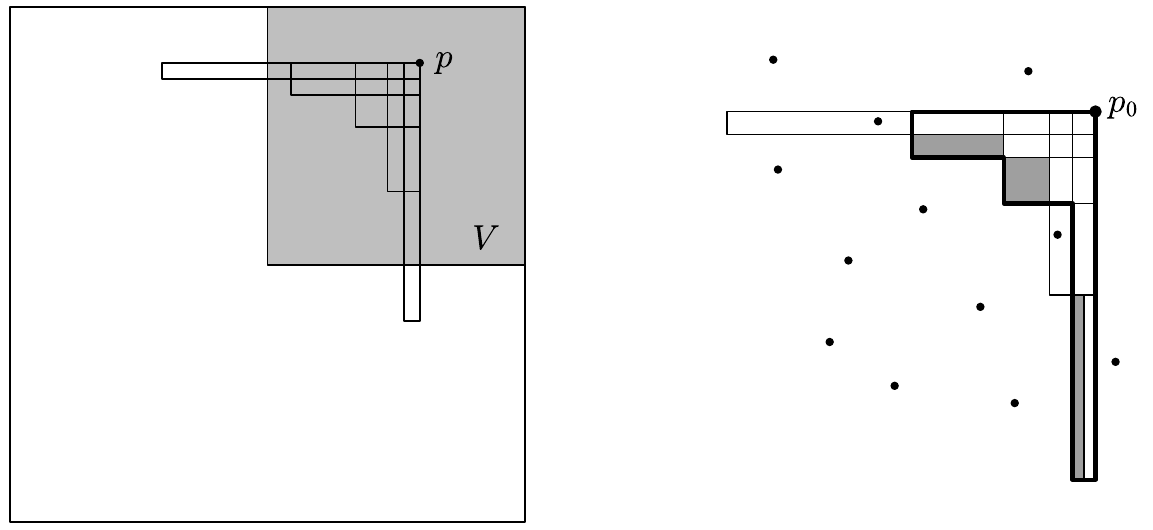}}
 \caption{\label{fig_fan}
The fan $\FF(p)$ (left); the stair-convex set $S$ made
of the empty boxes of $\FF(p_0)$ and the lower subboxes
witnessing the volume of~$S$ (right).}
\end{figure}

Thus, there exists a particular point $p_0\in V$ such that
$\FF(p_0)$ has at least $T/2$ empty boxes. We define $S$ as the
union of these empty boxes. Then $S\cap N=\emptyset$, $S$ is clearly
stair-convex, and it remains to bound from below the volume of~$S$.

For an axis-parallel box
$B=[a_1,a_1+s_1]\times\cdots\times[a_d,a_d+s_d]$ we define the
\emph{lower subbox} $B':= [a_1,a_1+\frac12 s_1]\times\cdots\times
[a_d,a_d+\frac12 s_d]$. We observe that if $B_{t_1}(p)$ and
$B_{t_2}(p)$ are two normal boxes of different types anchored at the
same point, then their lower subboxes are disjoint. Hence, $\vol(S)$
is at least the sum of volumes of the lower subboxes of $T/2$ normal
boxes, and so $\vol(S)\ge\frac T2 2^{-d} 2^{-k}=
\Omega((\log^{d-1}n)/n)$. Proposition~\ref{p:rothlike} is proved.
\proofend

\medskip

Now we show that Proposition~\ref{p:rothlike} is asymptotically
tight; namely, that for every $r\ge 1$ there exists a set $N \subset
\I^d$, $|N| = O(r \log^{d-1} r)$, intersecting every stair-convex
$S\subseteq \I^d$ with $\vol(S) \ge \frac 1r$.

We begin with the following fact: {\em For every $s\ge 1$ there
exists a set $N\subset \I^d$ of size $O(s)$ intersecting every
axis-parallel box $B\subseteq \I^d$ with $\vol(B)\ge\frac1s$. }
Indeed, the Van der Corput set in the plane and the
Halton--Hammersley sets in dimension $d$ have this property, as well
as many other constructions of low-discrepancy sets (Faure sets,
digital nets of Sobol, Niederreiter and others, etc.); see, e.g.,
\cite{m-gd}.

Given $r\ge 1$, we now set $s:= Cr\log^{d-1}r$ for a sufficiently
large constant $C$, and we let $N$ be a set as in the just mentioned
fact.  We claim that $N$ is the desired $1\over r$-net for $\I^d$
w.r.t.\ stair-convex sets. This follows from the next lemma.

\begin{lemma}
Let $S\subseteq \I^d$ be a stair-convex set that contains no
axis-parallel box of volume larger than $v$, $0<v\le1/e$ (here,
$e=2.71828\ldots$). Then $\vol(S)\le ev \ln^{d-1} {1\over v}$.
\end{lemma}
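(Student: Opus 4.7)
The plan is to prove this by induction on the dimension $d$, using the fact that slices of stair-convex sets along a coordinate hyperplane are themselves stair-convex and, unlike slices of merely convex or rectilinearly convex sets, are \emph{monotonically nested}. The base case $d=1$ is immediate: a stair-convex subset of $\I$ is an interval, which is itself an axis-parallel box, so $\vol(S)\le v\le ev = ev\ln^0(1/v)$.

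For the inductive step I would slice $S$ along the last coordinate: set $S_{x_d}:=\{y\in\I^{d-1}:(y,x_d)\in S\}$ and $\beta:=\sup\{x_d:S_{x_d}\ne\emptyset\}$. Two structural facts about these slices reduce the problem to a single-variable integration. First, each $S_{x_d}$ is itself stair-convex in $\I^{d-1}$, since the stair-path joining two points sharing the last coordinate stays in the corresponding horizontal hyperplane. Second, and crucially, the slices are nested: if $x_d<x_d'$ and both slices are nonempty then $S_{x_d}\subseteq S_{x_d'}$. Indeed, for $y\in S_{x_d}$ and any $y'\in S_{x_d'}$, the recursive definition of the stair-path from $(y,x_d)$ to $(y',x_d')$ begins with the vertical segment from $(y,x_d)$ to $(y,x_d')$, which must therefore lie in $S$. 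Consequently, any axis-parallel $(d-1)$-box $B'\subseteq S_{x_d}$ lifts to the $d$-dimensional box $B'\times(x_d,\beta)\subseteq S$ of volume $\vol(B')(\beta-x_d)$; the hypothesis on $S$ therefore forces every box contained in $S_{x_d}$ to have $(d-1)$-volume at most $v/(\beta-x_d)$.

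Whenever $\beta-x_d\ge ev$, this threshold does not exceed $1/e$, so the induction hypothesis applied to the slice yields
\begin{equation*}
\vol(S_{x_d})\;\le\;e\cdot\frac{v}{\beta-x_d}\,\ln^{d-2}\!\frac{\beta-x_d}{v};
\end{equation*}
for $\beta-x_d<ev$ the trivial bound $\vol(S_{x_d})\le 1$ suffices. To finish, I would evaluate $\vol(S)=\int_0^\beta\vol(S_{x_d})\,dx_d$ by splitting the integral at $x_d=\beta-ev$. After the substitution $u=\beta-x_d$ followed by $t=\ln(u/v)$, the nontrivial part integrates to at most $\tfrac{ev}{d-1}\bigl(\ln^{d-1}(1/v)-1\bigr)$ and the trivial part contributes at most $ev$; summing and using $\ln(1/v)\ge 1$ (which is exactly where the hypothesis $v\le 1/e$ enters) then gives $\vol(S)\le ev\ln^{d-1}(1/v)$.

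The decisive step is the slice-nesting argument in the second paragraph: it is precisely where stair-convexity beats rectilinear convexity, and without it one could not lift a box in a single slice to a $d$-dimensional box in $S$. Once that structural fact is in hand, the rest is a routine slicing calculation; only some minor care with boundary behavior (whether $\beta$ is attained, measurability of $x_d\mapsto\vol(S_{x_d})$) is needed, and it does not affect the estimate.
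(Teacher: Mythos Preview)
Your proof is correct and follows essentially the same route as the paper's: induction on $d$, slice along the last coordinate, use slice-monotonicity to lift a $(d-1)$-box in a slice to a $d$-box in $S$ (thereby bounding the box-threshold for each slice by $v/(\beta-x_d)$), apply the inductive hypothesis slice-by-slice, and integrate with the same split at $ev$. The only cosmetic difference is that the paper first normalizes to $\beta=1$ (``WLOG $S$ intersects the upper facet'') and works with the variable $z=1-x_d$, whereas you keep $\beta$ general and substitute $u=\beta-x_d$; the resulting integrals are identical.
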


\begin{proof}
We proceed by induction on $d$. The base case $d=1$ is trivial, so
we assume $d\ge 2$.

Without loss of generality we can assume $S$ intersects the ``upper
facet" of $\I^d$ (the facet of $\I^d$ with last coordinate equal to
$1$).

For $z\in [0,1]$ let $h=h(1-z)$ denote the ``horizontal'' hyperplane
$\{x\in \R^d : x_d=1-z\}$. Let $S' := S\cap h$, and let $B$ be an
axis-parallel box of maximum $(d-1)$-dimensional volume in $S'$. We
have $\vol_{d-1}(B) \le \frac v  z$, for otherwise, $B$ could be
extended upwards into a box of $d$-dimensional volume larger
than~$v$.

Since $S'$ is stair-convex, for $z\ge ev$ the inductive assumption
gives $\vol(S') \le {ev\over z} \ln^{d-2} {z\over v}$. We also have
$\vol(S') \le 1$. So for $v\le1/e$ we have
\begin{eqnarray*}
\vol(S) &\le &\int_0^{ev} {\rm d}z +\int_{ev}^{1} {ev \over z}
\ln^{d-2} {z\over v} \,{\rm d}z
= ev+ {ev\over d-1} \left(\ln^{d-1} {1\over v}-1\right)\\
&\le& ev+ev\left(\ln^{d-1} {1\over v}-1\right) = ev \ln^{d-1}
{1\over v}.
\end{eqnarray*}
This finishes the induction step.
\end{proof}

\section{The upper bound for the second selection lemma}\label{sec_SSL}

\begin{proof}[Proof of Theorem~\ref{t:2ndsel}. ]
We consider $n=m^2$ and the planar stretched grid $\Gs(m)$. Let us
now write $\Gs(m)=\{x_1,\ldots,x_m\}\times \{y_1,\ldots,y_m\}$. We
want to define a set $T$ of $t$ triangles with vertices in $\Gs(m)$
that is ``thin'', i.e., no point is contained in too many triangles.

Let $\rho\in (0,1]$ be a parameter, which we will later determine in
terms of $n$ and $t$.

Let $p_1 = (x_{i_1}, y_{j_1})$, $p_2 = (x_{i_2}, y_{j_2})$, $p_3 =
(x_{i_3}, y_{j_3})$ be three distinct points of $\Gs(m)$. Let us
call the triangle $\Delta = p_1p_2p_3$  \emph{increasing} if $i_1 <
i_2 <i_3$ and $j_1 < j_2 < j_3$. Let us define the \emph{horizontal
dimensions} of $\Delta$ as $h_{12}:=i_2 - i_1$ and $h_{23}:=
i_3-i_2$, and the \emph{vertical dimensions} as $v_{12}:=j_2-j_1$
and $v_{23}:=j_3-j_2$.

We define $T$ as the set of all increasing triangles $\Delta$
as above that satisfy
\begin{equation*}
\textstyle
\frac 13m \le i_2,j_2 \le {2\over 3}m;\ \ \ \ \ \ \ \
h_{12},h_{23},v_{12},v_{23}\le \frac13m;\ \ \ \ \ \ \
h_{12}v_{23} \le \rho n.
\end{equation*}
The last condition may look mysterious but it will be explained
soon. However, first we bound $|T|$ from below, which is routine.

An increasing triangle $\Delta$ is determined by $p_2$ and by its
horizontal and vertical dimensions. Each of $i_2,j_2,h_{23},v_{12}$
can be chosen independently in $\frac m3$ ways. The pair
$(h_{12},v_{23})$ can then be chosen, independent of the previous
choices, as a lattice point lying in the square $[0,\frac m3]^2$ and
below the hyperbola $xy=\rho n$, and one can easily calculate (by
integration, say) that the number of choices is of order $\rho n
\log\frac1\rho$. Thus $|T|=\Omega(n^3 \rho\log\frac1\rho)$, and thus
for $\rho:=C t/(n^3\log(n^3/t))$ with a sufficiently large constant
$C$ we obtain $|T|\ge t$ as needed. (Actually, the above calculation
of integer points under the hyperbola is valid only if $\rho$ is not
too small compared to $m$, but the assumptions of the theorem and
our choice of $\rho$ guarantee $\rho=\Omega(\frac1m)$.)

Let us fix an arbitrary point $q$ in the plane. It remains to bound
from above the number of triangles $\Delta\in T$ containing $q$. To
this end, we partition the triangles in $T$ into classes according
to their horizontal and vertical dimensions; let
$T(h_{12},h_{23},v_{12},v_{23})$ be one of these classes. The total
number of triangles in such an equivalence class equals the number
of choices of $p_2$, so it is  $\Theta(n)$. We want to show that
only $O(\rho n)$ of them contain~$q$.

We use Lemma~\ref{lemma_conv_sconv} with $P = \{p_1, p_2, p_3\}$ and
$Q = \{q\}$. Then, $q\in\Delta$ may hold only if
$q\in\sconv\{p_1,p_2,p_3\}$ or if $q$ is not far apart from at least
one of $p_1,p_2,p_3$.

If, say, $p_2$ is not far apart from $q$, then its position is
restricted to two rows or two columns of the grid, and similarly for
$p_1$ and $p_3$. Thus, there are only $O(m)$ choices for $\Delta$.

\begin{figure}
\centerline{\includegraphics{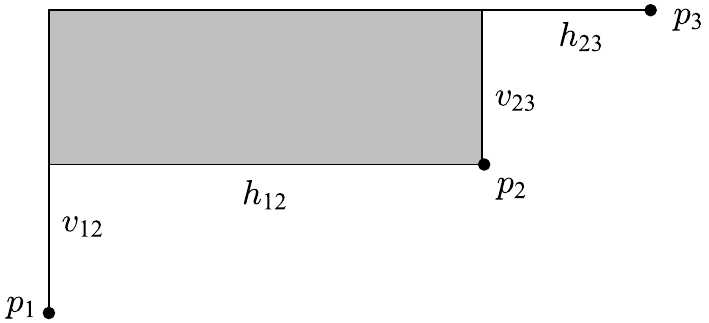}}
 \caption{\label{fig_incr_triangle}
 The  stair-convex hull of
the vertex set of a triangle in $T(h_{12},h_{23},v_{12},v_{23})$.}
\end{figure}

It remains to deal with the case $q\in\sconv\{p_1,p_2,p_3\}$.
The stair-convex hull of the vertex set of a triangle
$\Delta\in T(h_{12},h_{23},v_{12},v_{23})$ is depicted in
Fig.~\ref{fig_incr_triangle} (the picture actually shows the
image under $\pi$ in the uniform grid). It contains
$h_{12}v_{23}+O(m)\le \rho n+O(m)$ grid points, and thus there
are at most $\rho n+O(m)=O(\rho n)$ placements of $p_2$ such
that the stair-convex hull of the vertex set contains $q$.

So in every equivalence class of the triangles of $T$ only an
$O(\rho)$ fraction of triangles contain $q$. Thus $q$ lies in no
more than $O(\rho|T|)=O\bigl(t^2/(n^3\log (n^3/t))\bigr)$ triangles
of $T$ as claimed.
\end{proof}

\begin{remark}
A related problem calls for constructing a set of $t$ triangles
spanned by $n$ points in $\R^3$, such that no \emph{line} in $\R^3$
stabs too many triangles. The above upper bound does \emph{not}
generalize to this latter problem. This fact gives more weight to
our conjecture \cite{BMN_fsl} that the latter, three-dimensional
problem has a larger bound than the planar problem.
\end{remark}

\paragraph{The first selection lemma and
generalizations. }\label{sec_FSL} In \cite{BMN_fsl} we gave an
improved upper bound for the so-called \emph{first selection lemma},
by constructing an $n$-point set $X$ in $\R^d$ such that no point in
$\R^d$ is contained in more than $\bigl(\frac n {d+1} \bigr)^{d+1} +
O(n^d)$ of the $d$-dimensional simplices spanned by $X$. The
construction was precisely the ``main diagonal" $\diag$ of the
stretched grid~$\Gs$.

Now this can be regarded as a special case of the following result:

\begin{prop}\label{prop_FSL_upper_gral}
Let $X\subset\R^d$ be an $n$-point subset of $\Gs(m)$ for some $m$
such that every hyperplane perpendicular to a coordinate axis
contains only $o(n)$ points of $X$. (In particular, $X$ can be $\Gs$
itself.) Then no point $q\in\R^d$ is contained in more than
$(1+o(1))\bigl( \frac n{d+1} \bigr)^{d+1}$ of the $d$-simplices with
vertices in~$X$.
\end{prop}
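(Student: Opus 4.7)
Fix an arbitrary $q\in\R^d$. If $q\notin\B$ then every simplex with vertices in $X\subseteq\B$ misses $q$, so we may assume $q\in\B$ and in particular $q_i\ge 1$ for every coordinate. Call $p\in X$ \emph{good} if $p$ is far apart from $q$ in the sense defined before Lemma~\ref{lemma_conv_sconv}, and \emph{bad} otherwise. The plan is to handle subsets with a bad vertex by a coarse union bound and subsets of good vertices via Lemmas~\ref{lemma_conv_sconv} and~\ref{lemma_Carath_stair} followed by the AM--GM inequality.

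First I would bound the number of bad vertices. The coordinate $p_i$ fails the far-apart condition precisely when $p_i$ lies in the interval $(q_i/K_i,\,K_iq_i)$. Since consecutive elements of $X_i$ differ by a factor of at least $K_i$, this interval holds only $O(1)$ distinct values of $X_i$. Combined with the hypothesis that every coordinate-perpendicular hyperplane contains $o(n)$ points of $X$, this gives $o(n)$ bad vertices in total. Hence $(d{+}1)$-subsets of $X$ with at least one bad vertex contribute $o(n)\cdot O(n^d)=o(n^{d+1})$ simplices.

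For a good point $p$, the strict inequality $p_i\ne q_i$ (forced by $q_i>0$ together with the far-apart property) guarantees that $p$ has a unique type $j(p)\in\{0,1,\ldots,d\}$ with respect to $q$. This partitions the good points into classes $X_0,\ldots,X_d$ with $\sum_j|X_j|\le n$. If $P=\{p_0,\ldots,p_d\}$ consists entirely of good points, then $P$ and $\{q\}$ are far apart, so Lemma~\ref{lemma_conv_sconv} yields $q\in\conv(P)$ iff $q\in\sconv(P)$, and Lemma~\ref{lemma_Carath_stair} says the latter holds iff $P$ hits every one of the $d+1$ types. Since each good point has a unique type, such $P$ are exactly the transversals of the partition, and their number is
\[
\prod_{j=0}^{d}|X_j|\;\le\;\left(\frac{|X_0|+\cdots+|X_d|}{d+1}\right)^{d+1}\;\le\;\left(\frac{n}{d+1}\right)^{d+1}
\]
by AM--GM. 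Adding the $o(n^{d+1})$ contribution from bad simplices yields the claimed bound $(1+o(1))\bigl(n/(d+1)\bigr)^{d+1}$, uniformly in~$q$.

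The main technical obstacle is the bad-vertex estimate: making it $o(n)$ depends essentially on combining the rapid growth of the stretched grid (so only $O(1)$ values per axis can be close to any given $q_i$) with the $o(n)$ hypothesis on coordinate-perpendicular hyperplanes. Without the latter, a single such hyperplane could already hold a constant fraction of $X$, and a concentration of bad vertices there would spoil the argument.
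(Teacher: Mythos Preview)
Your proof is correct and follows exactly the approach of the paper: bound the simplices with a vertex not far apart from $q$ by $o(n^{d+1})$, use Lemmas~\ref{lemma_Carath_stair} and~\ref{lemma_conv_sconv} to force the remaining simplices to draw one vertex from each type class, and finish with AM--GM. The paper states this in one sentence; you have simply fleshed out the details (in particular the $O(1)$ count of $X_i$-values near $q_i$ and the uniqueness of the type), all of which are sound.
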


\begin{proof}
This follows immediately from the arithmetic-geometric mean
inequality since, by Lemmas~\ref{lemma_Carath_stair}
and \ref{lemma_conv_sconv}, every simplex
that contains $q$ (except for at most $o(n^{d+1})$ simplices that
have a vertex not lying far apart from  $q$) must have one vertex of
each type with respect to~$q$.
\end{proof}

On a related topic, our calculations show that in dimension
$3$, if we let $X:=\Gs(\sqrt[3]{n})$, then no line in $\R^3$
intersects more than $n^3/25 + o(n^3)$ triangles spanned by
$X$. This proves tightness of another result in \cite{BMN_fsl}
(assuming our calculations are correct). Unfortunately, the
calculations, although essentially straightforward, are rather
tedious and do not seem to generalize easily. (We would like to
find, for general $d$, $j$, and $k$, the maximum number of
$j$-simplices spanned by points of the $d$-dimensional
stretched grid that can be stabbed by a $k$-flat in $\R^d$.)

\section{The diagonal of the stretched grid}\label{sec_alpha}

Here we prove our results on the diagonal $\diag = \diag(n)$ of the
stretched grid $\Gs(n)$. We start by showing that, if $\Gs$ is
defined appropriately, then $\diag$ lies on a curve that intersects
every hyperplane in at most $d$ points.

Indeed, if each element $x_{ij}$ of each $X_i$ in the definition of
$\Gs$ is chosen \emph{minimally}, then we have $x_{ij} = K_i^{j-1}$,
and so
\begin{equation*}
\diag =  \bigl\{ (K_1^t, \ldots, K_d^t) : t = 0, 1, \ldots, n-1
\bigr\}.
\end{equation*}
Thus, $\diag$ is a subset of the curve
\begin{equation*}
\gamma = \bigl\{ (K_1^t, \ldots, K_d^t) : t\in \R \bigr\}.
\end{equation*}

\begin{lemma}
Let $\gamma\subset \R^d$ be a curve of the form
\begin{equation*}
\gamma = \bigl\{ (c_1^t, \ldots, c_d^t) : t\in \R \bigr\},
\end{equation*}
for some positive constants $c_1, \ldots, c_d$. Then every
hyperplane in $\R^d$ intersects $\gamma$ at most $d$ times.
\end{lemma}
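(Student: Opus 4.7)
The plan is to reduce the geometric question to counting real zeros of an exponential sum, and then apply a classical Rolle's-theorem argument. Any hyperplane $H\subset\R^d$ has an equation $a_1x_1+\cdots+a_dx_d=b$ with $(a_1,\ldots,a_d)\neq 0$. Substituting the parametrization $x_i=c_i^t$, the points of $\gamma\cap H$ correspond bijectively to the real solutions $t$ of
\begin{equation*}
F(t)\;:=\;\sum_{i=1}^{d} a_i c_i^t \;-\; b\cdot 1^t\;=\;0.
\end{equation*}
So it suffices to bound the number of real zeros of $F$. Note we may assume the $c_i$ are all distinct (and distinct from $1$): if two were equal, $\gamma$ would lie in a coordinate hyperplane and the claim in the excerpt would already be vacuous, so the intended hypothesis is that the $c_i$ are pairwise distinct positive reals, which I shall assume.

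The key tool is the following standard fact, which I would prove by induction on $k$: \emph{if $b_1,\ldots,b_k$ are distinct positive reals and $\lambda_1,\ldots,\lambda_k\in\R$ are not all zero, then $g(t)=\sum_{i=1}^{k}\lambda_i b_i^t$ has at most $k-1$ real zeros.} The base case $k=1$ is immediate since $\lambda_1 b_1^t$ is either identically zero or never vanishes. For the induction step, suppose $g$ has $k$ zeros. Dividing through by $b_k^t>0$ (which preserves zeros) gives $h(t)=\lambda_k+\sum_{i=1}^{k-1}\lambda_i(b_i/b_k)^t$ with $k$ zeros, so by Rolle's theorem $h'$ has at least $k-1$ zeros. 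But
\begin{equation*}
h'(t)=\sum_{i=1}^{k-1}\lambda_i\ln(b_i/b_k)\cdot(b_i/b_k)^t
\end{equation*}
is an exponential sum over the $k-1$ distinct positive bases $b_i/b_k$ (distinct because the $b_i$ are distinct, and each $\neq 1$ because $b_i\neq b_k$). If all the coefficients $\lambda_i\ln(b_i/b_k)$ vanish then $\lambda_1=\cdots=\lambda_{k-1}=0$, forcing $g=\lambda_k b_k^t$, which has no zeros (contradiction). Otherwise the inductive hypothesis bounds the zeros of $h'$ by $k-2$, again contradicting the lower bound of $k-1$.

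Applying this with $k=d+1$ and bases $c_1,\ldots,c_d,1$ (which are pairwise distinct by assumption), the sum $F(t)$ has at most $d$ real zeros, so $\gamma\cap H$ has at most $d$ points. The only subtlety worth flagging is the handling of the constant term: one must treat $-b$ as the coefficient of an extra exponential $1^t$ so that the Rolle's-theorem induction applies uniformly to $d+1$ genuinely different exponentials rather than to a mixed object (sum of exponentials plus constant). Beyond that, the argument is routine.
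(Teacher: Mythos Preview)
Your proof is correct and follows essentially the same approach as the paper: reduce to counting zeros of an exponential sum and induct via Rolle's theorem, eliminating one exponential at each step by dividing out a base and differentiating. The paper's version is terser (it differentiates first to kill the constant, then factors out $c_1^t$), but the mechanism is identical. You are also more explicit than the paper about the need for the $c_i$ to be pairwise distinct and distinct from $1$; note only that when two $c_i$ coincide the curve lies in the hyperplane $x_i=x_j$ rather than a coordinate hyperplane, but your conclusion (that the statement is vacuous or degenerate in that case) is correct, and in the paper's application the bases $K_i$ are strictly increasing and all exceed~$1$.
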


\begin{proof}
The claim is equivalent to showing that the function
\begin{equation*}
f(t)=\alpha_1 c_1^t + \cdots + \alpha_d c_d^t + \alpha_{d+1}
\end{equation*}
has at most $d$ zeros for any choice of parameters $\alpha_1,
\ldots, \alpha_{d+1}$. Letting $\beta_i = \alpha_i\ln c_i$, it
suffices to show that
\begin{equation*}
f'(t) = \beta_1 c_1^t + \beta_2 c_2^t + \cdots + \beta_d c_d^t =
c_1^t \bigl( \beta_1 + \beta_2 (c_2/c_1)^t + \cdots + \beta_d
(c_d/c_1)^t \bigr)
\end{equation*}
has at most $d-1$ zeros. But $c_1^t$ never equals zero, so the claim
follows by induction.
\end{proof}

Next, we prove Theorem~\ref{t:alphas}, the lower and upper bounds
for the size of weak $\frac1r$-nets for the diagonal $\diag$ of the
stretched grid. We reduce the problem to results of Alon et
al.~\cite{interval_chains} concerning the problem of \emph{stabbing
interval chains}.

\subsection{The Ackermann function and its inverse}

We introduce the Ackermann function and its inverse following
\cite{DS_yo}:

The \emph{Ackermann hierarchy} is a sequence of functions $A_k(n)$,
for $k \ge 1$ and $n\ge 0$, where $A_1(n) = 2n$, and for $k\ge 2$ we
let $A_k(n) = A_{k-1}^{(n)}(1)$. (Here $f^{(n)}$ denotes the
$n$-fold composition of $f$.) The definition of $A_k(n)$ for $k\ge
2$ can also be written recursively: $A_k(0) = 1$, and $A_k(n)
=A_{k-1}{\bigl( A_k(n-1) \bigr)}$ for $n\ge 1$. We have $A_2(n) =
2^n$, and $A_3(n) = 2^{2^{\cdots^2}}$ is a tower of $n$ twos.

We have $A_k(1) = 2$ and $A_k(2) = 4$, but $A_k(3)$ already grows
very rapidly with $k$. We define the \emph{Ackermann function} as
$A(n) = A_n(3)$. Thus, $A(n) = 6, 8, 16, 65536, \ldots$ for $n = 1,
2, 3, \ldots$.

We then define the slow-growing inverses of these rapidly-growing
functions as $ \alpha_k(x) = \min\{ n : A_k(n) \ge x \}$ and
$\alpha(x) = \min\{ n : A(n) \ge x\}$ for all real $x\ge 0$.

Alternatively, and equivalently, we can define these inverse
functions directly: We define the \emph{inverse Ackermann
hierarchy} by letting $\alpha_1(x) = \lceil x/2 \rceil$ and,
for $k\ge 2$, defining $\alpha_k(x)$ recursively by
\begin{equation}\label{eq_rec_alpha_k}
\alpha_k(x) =
\begin{cases}
0, & \text{if $x\le 1$};\\
1+ \alpha_k{\bigl( \alpha_{k-1}(x) \bigr)}, & \text{otherwise}.
\end{cases}
\end{equation}
In other words, for each $k\ge 2$, $\alpha_k(x)$ denotes the
number of times we must apply $\alpha_{k-1}$, starting from
$x$, until we reach a value not larger than $1$. Thus,
$\alpha_2(x) = \lceil \log_2 x \rceil$, and $\alpha_3(x) =
\log^*x$. Finally, we define the \emph{inverse Ackermann
function} by $\alpha(x) = \min{\{ k : \alpha_k(x) \le 3 \}}$.

Note that, by definition, we have $\alpha_{\alpha(x)}(x) \le 3$
and $\alpha_{\alpha(x)-1}(x) \ge 4$. Furthermore,
$\alpha_{\alpha(x) - 2}(x) \ge 5$ (since $\alpha_{k-1}(x) >
\alpha_k(x)$ whenever $\alpha_{k-1}(x) \ge 4$). We now show
that $\alpha_{\alpha(x) - 3}(x)$ grows to infinity with $x$,
and in fact it does so much faster than $\alpha(x)$ (though
obviously slower than $\alpha_k(x)$ for every fixed $k$):

\begin{lemma}\label{lemma_alpha_alpha_3}
Let $x$ be large enough so that $\alpha(x) \ge 4$. Then,
\begin{equation*}
\alpha_{\alpha(x) - 3}(x) > A(\alpha(x) - 2).
\end{equation*}
\end{lemma}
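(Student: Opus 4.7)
The plan is to convert the defining condition $\alpha_{k-1}(x) \ge 4$ (with $k := \alpha(x)$, so $k \ge 4$ by hypothesis) into an explicit numerical lower bound on $x$, and then read off the claimed lower bound on $\alpha_{k-3}(x)$ by inverting. Setting $k := \alpha(x)$, the definition of $\alpha$ given in the excerpt yields $\alpha_k(x) \le 3$ and $\alpha_{k-1}(x) \ge 4$ directly.

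The key auxiliary fact I would establish first is the clean inverse relationship
\begin{equation*}
\alpha_j(y) \le n \iff y \le A_j(n), \qquad j \ge 1,\ n \ge 0,\ y \ge 0,
\end{equation*}
proved by induction on $j$. The base case $j = 1$ is immediate from $\alpha_1(y) = \lceil y/2 \rceil$ and $A_1(n) = 2n$. For the inductive step, the recursion (\ref{eq_rec_alpha_k}) gives $\alpha_j(y) \le n$ iff $\alpha_{j-1}^{(n)}(y) \le 1$, and applying the inductive hypothesis $n$ times together with the identity $A_j(n) = A_{j-1}^{(n)}(1)$ (which is built into $A_j(0) = 1$ and $A_j(m) = A_{j-1}(A_j(m-1))$) yields the equivalence. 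Applied at $j = k-1$, $n = 3$, the hypothesis $\alpha_{k-1}(x) \ge 4$ translates to $x > A_{k-1}(3)$.

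The second step is to unwind $A_{k-1}(3)$ by two applications of the recursion, using the easily checked base value $A_j(2) = 4$ (itself an immediate induction from $A_1(2) = 4$ via $A_j(1) = 2$):
\begin{equation*}
A_{k-1}(3) = A_{k-2}(A_{k-1}(2)) = A_{k-2}(4) = A_{k-3}(A_{k-2}(3)) = A_{k-3}(A_{k-3}(4)).
\end{equation*}
Hence $x > A_{k-3}(A_{k-3}(4))$, and one more application of the inverse equivalence, now at $j = k-3$ and $n = A_{k-3}(4)$, yields $\alpha_{k-3}(x) > A_{k-3}(4) = A_{k-2}(3) = A(k-2)$, which is exactly the claim.

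There is no real obstacle; the whole argument is bookkeeping around the two recursions. The observation worth making explicit is that the hypothesis $\alpha_{k-1}(x) \ge 4$ and the conclusion $\alpha_{k-3}(x) > A(k-2)$ are in fact equivalent: unwinding one step of the $\alpha$-recursion on the left matches the two nested applications of $A_{j-1}$ hidden inside the expansion of $A_{k-1}(3)$ on the right, so the quantitative gap between the hypothesis and the conclusion is precisely the gap built into the Ackermann hierarchy itself.
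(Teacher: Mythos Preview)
Your proof is correct, but it takes a different route from the paper's. The paper works entirely on the $\alpha$ side: it starts from the slightly stronger fact $\alpha_{\alpha(x)-2}(x)\ge 5$ (established just before the lemma), applies one step of the recursion~(\ref{eq_rec_alpha_k}) to obtain $\alpha_{\alpha(x)-2}\bigl(\alpha_{\alpha(x)-3}(x)\bigr)\ge 4$, and then routes through the \emph{diagonal} inverse $\alpha$ via the implications ``$\alpha_k(y)\ge 4\Rightarrow k\le\alpha(y)-1$'' and ``$n\le\alpha(y)\Rightarrow y>A(n-1)$''. You instead pass to the $A$ side immediately using the clean equivalence $\alpha_j(y)\le n\iff y\le A_j(n)$, unwind $A_{k-1}(3)$ twice to get $A_{k-3}(A_{k-2}(3))$, and invert once more. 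Your approach is slightly longer to set up but more transparent: it never touches the diagonal $\alpha$, and it makes visible your closing remark that the hypothesis $\alpha_{k-1}(x)\ge 4$ and the conclusion $\alpha_{k-3}(x)>A(k-2)$ are in fact \emph{equivalent} (both say exactly $x>A_{k-1}(3)$). One small simplification: since the paper already defines $\alpha_k$ as $\min\{n:A_k(n)\ge x\}$, your inverse equivalence is immediate from that definition and does not actually require the inductive argument via~(\ref{eq_rec_alpha_k}).
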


\begin{proof}
As noted above, we have $\alpha_{\alpha(x) - 2}(x) \ge 5$.
Thus, by (\ref{eq_rec_alpha_k}),
\begin{equation*}
5 \le \alpha_{\alpha(x) - 2}(x) = 1 + \alpha_{\alpha(x) - 2}{\bigl( \alpha_{\alpha(x) - 3}(x) \bigr)},
\end{equation*}
so $\alpha_{\alpha(x) - 2}{\bigl( \alpha_{\alpha(x) - 3}(x)
\bigr)} \ge 4$. But $\alpha_k(y) \ge 4$ implies $k \le
\alpha(y)-1$, so in our case,
\begin{equation*}
\alpha(x) - 1\le \alpha{\bigl( \alpha_{\alpha(x) - 3}(x) \bigr)}.
\end{equation*}
Finally, $n \le \alpha(y)$ implies $y > A(n-1)$, and the lemma
follows.
\end{proof}

The fact that $\alpha_{\alpha(x) - 3}(x) \to\infty$ will be
used below.

\subsection{Stabbing interval chains}

We now recall the problem of stabbing interval chains and the
bounds obtained in \cite{interval_chains, nivasch_phd}.

Let $[i,j]$ denote the interval of integers $\{i,i+1,\ldots,j\}$. An
\emph{interval chain} of size $k$ (also called a \emph{$k$-chain})
is a sequence of $k$ consecutive, disjoint, nonempty intervals
\begin{equation*}
C = I_1I_2\cdots I_k = [a_1,a_2][a_2+1,a_3]\cdots[a_k+1,a_{k+1}],
\end{equation*}
where $a_1 \le a_2 < a_3 < \cdots < a_{k+1}$. We say that a
$j$-tuple of integers $(p_1, \ldots, p_j)$ \emph{stabs} an interval
chain $C$ if each $p_i$ lies in a different interval of $C$.

The problem is to stab, with as few $j$-tuples as possible, all
interval chains of size $k$ that lie within a given range $[1,n]$.
We let $\z^{(j)}_k(n)$ denote the minimum size of a collection $Z$
of $j$-tuples that stab all $k$-chains that lie in $[1,n]$.

Alon et al.~showed in \cite{interval_chains, nivasch_phd} that,
for every fixed $j\ge 3$, once $k$ is large enough,
$\z^{(j)}_k(n)$ has near-linear lower and upper bounds roughly
of the form $n \alpha_m(n)$, where $m$ grows with $k$.
Specifically:

\begin{theorem}[Interval-chain lower bounds \cite{nivasch_phd}]\label{thm_z_j_lower}
Let $j\ge 3$ be fixed, and let $t = \lfloor j/2 \rfloor-1$. Then
there exists a function $Q_j(m)$ of the form
\begin{equation*}
Q_3(m) = 2m+1, \qquad Q_4(m) = \Omega{\bigl( 2^m \bigr)},
\end{equation*}
and, in general,
\begin{equation*}
Q_j(m) \ge
\begin{cases}
2^{(1/t!) m^t - O(m^{t-1})}, & \text{$j$ even};\\
2^{(1/t!) m^t \log_2 m - O(m^t)}, & \text{$j$ odd};
\end{cases}
\end{equation*}
such that, for all $m\ge 3$, if $k\le Q_j(m)$ then
\begin{equation}\label{eq_thm_z_lower}
\z^{(j)}_{k}(n) \ge c_j n \alpha_m(n) - c'_j n \qquad \text{for all $n$},
\end{equation}
for some constants $c_j$ and $c'_j$ that depend only on
$j$.\footnote{These lower bounds were stated
in~\cite{interval_chains} in a somewhat weaker form; we need
this stronger formulation from~\cite{nivasch_phd} in our
application below.}
\end{theorem}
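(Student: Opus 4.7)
The plan is to construct, at each Ackermann level $m$, a ``hard'' family of $k$-chains in $[1,n]$ that cannot be stabbed by fewer than $\Omega(n \alpha_m(n))$ $j$-tuples, where $k$ is allowed to be as large as $Q_j(m)$. The construction should mirror the classical Ackermann-hierarchy lower bounds for Davenport--Schinzel sequences \cite{DS_yo}, since $Q_j(m)$ grows in $m$ at exactly the rate dictated by $\lambda_s$ for $s$ roughly equal to $j$; the two problems are essentially dual, in that stabbing a long $k$-chain with a $j$-tuple corresponds to forbidding a $j$-term alternating subpattern.

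\textbf{Base cases $j=3$ and $j=4$.} For $j=3$ I would adapt the Hart--Sharir construction for $\lambda_3$: at each Ackermann level $m$ one partitions $[1,n]$ into blocks of suitable size, recursively places an $(m-1)$-level hard family inside each block, and adds ``linking'' $3$-chains crossing block boundaries. A stabbing triple either lies inside a single block (accounted for by the inductive hypothesis) or crosses blocks (charged via the linking chains); this yields $\Omega(n\alpha_m(n))$ with chains of size only $O(m)$, so $Q_3(m) \ge 2m+1$. For $j=4$ I would use an exponential-doubling variant in the same spirit, giving $Q_4(m) = \Omega(2^m)$, paralleling the Agarwal--Sharir--Shor lower bound for $\lambda_4$.

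\textbf{Inductive step in $j$.} For $j\ge 5$ I would pass from $(j-2, m)$ to $(j, m)$ by a nested substitution: inside each interval of an ``outer'' $(j-2)$-chain from the previous family, substitute a whole copy of the $(j-2)$-family's hard chains on a fresh sub-range. Any stabbing $j$-tuple then decomposes into an outer $(j-2)$-tuple plus two extra coordinates that must fall into a single inner substituted chain, so the total number of tuples multiplies. Writing $t = \lfloor j/2\rfloor - 1$, this yields a recursion of the shape $\log_2 Q_j(m) \ge \log_2 Q_{j-2}(m) + \log_2 Q_j(m-1)$, whose polynomial solution is $m^t/t! - O(m^{t-1})$ for even $j$. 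The extra $\log_2 m$ factor for odd $j$ arises because the $j=3$ base case contributes a linear-in-$m$ term at each step of the recursion, and telescoping these contributions produces $m^t \log_2 m / t!$.

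\textbf{Main obstacle.} The delicate point is threading the Ackermann accounting through both the $k$ axis (chain size, growing like $Q_j(m)$) and the $n$ axis (stabbed range, inflating during substitutions). Each substitution step multiplies $n$ by $O(Q_j(m))$, so one must invoke $\alpha_m(c\cdot n) = \alpha_m(n) + O_m(1)$ to keep the bound in the clean form $c_j n \alpha_m(n) - c'_j n$; the subtracted $c'_j n$ term is precisely what absorbs the cumulative constant-per-level losses from these shifts. One must also verify carefully that the nested substitution produces genuine $j$-chains in the strict disjoint-interval sense, and that the two ``extra'' coordinates of a stabbing $j$-tuple must indeed fall inside a single inner substituted chain rather than straddling two distinct substitutions---this combinatorial bookkeeping is where the chain-stabbing problem is genuinely richer than its Davenport--Schinzel analogue, and where I would expect the proof in \cite{nivasch_phd} to invest most of its effort.
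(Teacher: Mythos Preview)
The paper does not prove this theorem at all: it is quoted as a black-box result from \cite{interval_chains,nivasch_phd} (with the footnote explicitly pointing to the stronger formulation in \cite{nivasch_phd}), and is then applied in Corollary~\ref{cor_ell} without further argument. So there is no ``paper's own proof'' to compare your proposal against.

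Your sketch is a plausible outline of how the cited proof goes---the parallel with Davenport--Schinzel lower bounds is real, and the recursive substitution in $j$ and $m$ is the right shape---but since the present paper offers no details, I cannot confirm that your substitution step, your charging argument, or your treatment of the $c'_j n$ slack matches what \cite{nivasch_phd} actually does. If you want to verify your plan you will have to consult that source directly.
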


\begin{theorem}[Interval-chain upper bounds
\cite{interval_chains,nivasch_phd}]\label{thm_z_j_upper}Let
$j\ge 3$ be fixed, and let $t = \lfloor j / 2\rfloor-1$. Then
there exists a function $P_j(m)$ of the form
\begin{equation*}
P_3(m) = 2m, \qquad P_4(m) = O{\bigl( 2^m \bigr)},
\end{equation*}
and, in general,
\begin{equation*}
P_j(m) \le
\begin{cases}
2^{(1/t!) m^t + O(m^{t-1})}, & \text{$j$ even};\\
2^{(1/t!) m^t \log_2 m + O(m^t)}, & \text{$j$ odd};
\end{cases}
\end{equation*}
such that, for all $m\ge 3$, if $k\ge P_j(m)$ then
\begin{equation}\label{eq_thm_z_upper}
\z^{(j)}_{k}(n) \le c''_j n \alpha_m(n) \qquad \text{for all $n$},
\end{equation}
for some constants $c''_j$ that depend only on $j$.
\end{theorem}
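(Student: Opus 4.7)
My plan is to establish Theorem~\ref{thm_z_j_upper} by constructing the stabbing family of $j$-tuples explicitly, using a doubly indexed induction: an outer induction on the tuple arity $j$, and an inner induction on the Ackermann-hierarchy level $m$. The invariant maintained at each level is that the stabbing family has size at most $c''_j n\,\alpha_m(n)$, with $c''_j$ depending only on $j$, provided the chain length $k$ is at least $P_j(m)$.

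For the base case $j=3$ with $k\ge 2m = P_3(m)$, I would build the triple family by recursion on $m$. Partition $[1,n]$ into $B := \alpha_{m-1}(n)$ consecutive blocks of equal size, invoke the inductive hypothesis inside each block to stab all $(2m-2)$-chains that are contained in a single block, and add a small number of ``gateway'' triples to each block to stab any $2m$-chain whose intervals straddle a block boundary. This yields the recurrence
\[
\z^{(3)}_{2m}(n) \;\le\; B\cdot \z^{(3)}_{2(m-1)}(n/B) + O(n),
\]
which telescopes to $O(n\,\alpha_m(n))$ directly from the definition of the Ackermann hierarchy, since $\alpha_{m-1}(n/B) = \alpha_m(n)-1$ when $B=\alpha_{m-1}(n)$.

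For the inductive step $j \to j+1$, I would lift a stabbing family of $j$-tuples for $P_j(m)$-chains to a family of $(j+1)$-tuples by scanning an additional coordinate across the range: each $j$-tuple $(p_1,\ldots,p_j)$ is replaced by a family of $(j+1)$-tuples obtained by prepending a new coordinate $p_0$ drawn from a carefully chosen ladder of candidate positions, one per block at an appropriate scale. Any $(j+1)$-chain in $[1,n]$ is then stabbed by some such extended tuple, provided its length is large enough that the first interval of the chain fits inside the spacing of the ladder. The required blow-up in $k$ between $P_j(m)$ and $P_{j+1}(m)$ exactly reflects the size of this ladder, and iterating this construction $t = \lfloor j/2\rfloor - 1$ times yields the stated $\frac{1}{t!}m^t$ leading exponent. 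The extra $\log_2 m$ factor in the odd case arises from a parity effect in how the ``primal'' ladder (indexed by position) and a ``dual'' ladder (indexed by scale) are interleaved; the even case allows a symmetric construction in which this factor cancels.

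The main obstacle, I expect, will be the quantitative bookkeeping that pins down the leading exponent of $P_j(m)$ to precisely $\frac{1}{t!}m^t$, or $\frac{1}{t!}m^t \log_2 m$, rather than a larger constant multiple. Keeping the constants $c''_j$ dependent only on $j$ forces the additive $O(n)$ overhead in each round of the inner recursion, and the multiplicative blow-up in each round of the outer recursion, to be absorbed without introducing any dependence on $m$ or $n$. Handling the parity split between even and odd $j$ cleanly — so that the $\log_2 m$ factor appears only on odd steps — will require a careful case analysis, which is the subtlest point of the construction.
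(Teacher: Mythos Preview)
The paper does not prove Theorem~\ref{thm_z_j_upper} at all: it is quoted verbatim as a result of Alon, Kaplan, Nivasch, Sharir, and Smorodinsky~\cite{interval_chains,nivasch_phd} and used as a black box in the proof of Theorem~\ref{t:alphas}(ii). There is therefore no ``paper's own proof'' to compare your proposal against.

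As for the proposal itself: the overall architecture --- a recursion on the hierarchy level $m$ via a block decomposition with $\alpha_{m-1}(n)$ blocks, combined with an outer induction on the arity $j$ --- is indeed the shape of the argument in~\cite{interval_chains,nivasch_phd}. However, what you have written is a plan rather than a proof. The $j=3$ recurrence is plausible, but the step $j\to j+1$ is left essentially undefined: ``prepending a coordinate drawn from a carefully chosen ladder'' does not specify the ladder, does not verify that every long chain is stabbed, and does not control the size blow-up. Your explanation of why the leading exponent comes out as exactly $\frac{1}{t!}m^t$ (and why the $\log_2 m$ factor appears only for odd $j$) is asserted, not derived; in the actual construction this constant arises from a specific combinatorial identity governing how the chain-length threshold compounds across the $j$-induction, and getting it right is the substance of the argument, not a bookkeeping afterthought. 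If you intend to reproduce the proof, you will need to spell out the stabbing construction concretely at each arity and actually compute $P_j(m)$ from the recursion, as is done in~\cite{interval_chains,nivasch_phd}.
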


\subsection{Proof of the lower bounds}

\begin{lemma}\label{lemma_ell_to_z_lower}
Given $r>1$, let $N$ be a weak $1\over r$-net for $\diag =
\diag(n)$, for $n = n(r)$ large enough. Let $\ell = |N|$. Then
$\ell$ must satisfy
\begin{equation*}
\ell \ge \z^{(d)}_{4d\ell / r} (\ell).
\end{equation*}
\end{lemma}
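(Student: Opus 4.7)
My plan is to extract from the weak $\frac{1}{r}$-net $N$ a family of $\ell$ $d$-tuples in $[1,\ell]^d$ that stabs every $(4d\ell/r)$-chain in $[1,\ell]$; by the definition of $\z^{(d)}_k$, this directly yields $\ell\ge\z^{(d)}_{4d\ell/r}(\ell)$. By a harmless generic perturbation (preserving the weak-net property for $n$ large enough), we may assume each $q\in N$ is far apart from every diagonal point $p_t\in\diag(n)$, so Lemmas~\ref{lemma_Carath_stair} and~\ref{lemma_conv_sconv} imply that $q\in\conv\{p_t:t\in J\}$ iff $J\subseteq[1,n]$ meets each of the $d+1$ \emph{type intervals} $[1,T_1(q)]$, $(T_1(q),T_2(q)]$, \ldots, $(T_d(q),n]$, where $T_i(q):=|\{t:x_{i,t}\le q_i\}|$. (If the $T_i(q)$ fail to be strictly increasing, $q$ captures no subset, and we assign it a trivial tuple.)

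Partition $[1,n]$ into $\ell$ equally sized blocks $B_1,\dots,B_\ell$ of $n/\ell$ indices, and put $\tau(q):=(\tilde T_1(q),\dots,\tilde T_d(q))\in[1,\ell]^d$, where $\tilde T_i(q)$ denotes the block index containing $T_i(q)$. Suppose for contradiction that some $k$-chain $C=I_1I_2\cdots I_k$ in $[1,\ell]$ with $k=4d\ell/r$ is stabbed by none of the $\tau(q)$. We will construct a set $J\subseteq[1,n]$ with $|J|\ge n/r$ such that no $q\in N$ lies in $\conv\{p_t:t\in J\}$, contradicting the weak-net property. Concretely, $J$ is a union over $j=1,\dots,k$ of a cluster of $s:=\lfloor n/(4d\ell)\rfloor$ consecutive diagonal indices placed at the left extreme of the block range of $I_j$; then $|J|=sk\ge n/r$, which is exactly why the chain length $k$ carries the factor $4d$.

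It remains to show that no $q\in N$ captures $J$. Since $\tau(q)$ does not stab $C$, either (a) some $\tilde T_i(q)$ lies outside the block range covered by $\bigcup_j I_j$, in which case the corresponding boundary type ($[1,T_1(q)]$ or $(T_d(q),n]$) lies entirely to one side of every block used by $J$ and hence misses $J$; or (b) two thresholds $\tilde T_i(q),\tilde T_{i+1}(q)$ fall inside the same interval $I_j$ of $C$. In case (b) the intermediate type interval $(T_i(q),T_{i+1}(q)]$ is contained in the block range of $I_j$, so one must show the cluster $J_j\subset I_j$ avoids this narrow strip; because $J_j$ consists of only $s\ll n/\ell$ indices placed at the leftmost block of $I_j$, a direct sub-case analysis (depending on whether $\tilde T_i(q)>\min I_j$ or $\tilde T_i(q)=\min I_j$) shows $J_j$ lies to the left of $T_i(q)$ and thus avoids $(T_i(q),T_{i+1}(q)]$. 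The main obstacle is the delicate sub-case $\min I_j=\tilde T_i(q)$, where $J_j$ and the type interval share a block; here one must exploit that the cluster width $s=n/(4d\ell)$ is only a $1/(4d)$ fraction of the block width $n/\ell$, together with the strictness of the inequality $T_i(q)>(\tilde T_i(q)-1)n/\ell$, to force $J_j$ strictly to the left of $T_i(q)$. Once this combinatorial verification is carried out, the existence of $J$ contradicts the weak-net property and the lemma follows.
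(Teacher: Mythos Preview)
Your outline has the right overall shape---encode each net point $q$ as a $d$-tuple of block indices and argue that these tuples must stab every $k$-chain---but the ``delicate sub-case'' in case~(b) is a genuine gap, not merely a detail to be filled in. The claimed resolution is false: knowing only that $s=n/(4d\ell)$ is a $1/(4d)$-fraction of the block width and that $T_i(q)>(\tilde T_i(q)-1)n/\ell$ does \emph{not} force $J_j$ to the left of $T_i(q)$. Concretely, take $d=2$, $I_1=\{1\}$, and a point $q$ with $T_1(q)=1$ and $T_2(q)=n/\ell$, so that $\tilde T_1(q)=\tilde T_2(q)=1$ and $\tau(q)=(1,1)$ fails to stab $C$. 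Your cluster $J_1=\{1,\dots,s\}$ then meets all three type intervals $[1,T_1]$, $(T_1,T_2]$, $(T_2,n]$ (the last via $J_2$), so $q$ captures $J$. The problem is intrinsic to your encoding: block boundaries and thresholds $T_i(q)$ are unrelated, so a coarse block index $\tilde T_i(q)$ does not pin down where in the block the threshold sits.

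The paper avoids exactly this difficulty by first marking as \emph{bad} the (at most $2d\ell$) diagonal points adjacent in some coordinate to some $q\in N$, then partitioning into $4d\ell$ small blocks and keeping only the $\ge 2d\ell$ \emph{good} blocks containing no bad point. This guarantees that every coordinate of every $q$ falls \emph{between} good blocks, so the separator-tuple $x'=(Y_{a_1},\dots,Y_{a_d})$ is well-defined and the stabbing relation is exact, not approximate. Your ``harmless perturbation'' is also suspect: being far apart is a multiplicative condition ($K_i q_i\le x_{i,t}$ or $K_i x_{i,t}\le q_i$), and the stretched grid as defined only guarantees consecutive values differ by a factor $K_i$, so there may be \emph{no} position between $x_{i,t}$ and $x_{i,t+1}$ that is far apart from both; the paper's bad-point device handles this too, since discarding the two neighbours of $q$ in each coordinate makes all remaining diagonal points far apart from $q$ automatically.
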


\begin{proof}
For each point $x\in N$ and each coordinate $1\le j\le d$, mark as
``bad" the two points of $\diag$ that surround $x$ when the points
are projected into the $j$-th coordinate. Thus, at most $2d\ell$
points of $\diag$ are marked ``bad".

Partition $\diag$ into $4d\ell$ contiguous blocks of size
$n/(4d\ell)$ each (we can safely ignore the rounding to integers if
$n$ is large enough). Then there are $2d\ell$ blocks $B_1, \ldots,
B_{2d\ell}$ which are ``good", in the sense that they do not contain
any bad points. Place $2d\ell-1$ abstract ``separators" $Y_1,
\ldots, Y_{2d\ell-1}$ between these blocks, such that $Y_i$ lies
between $B_i$ and $B_{i+1}$.

Let $k = 4d\ell/r$. There is a natural one-to-one correspondence
between sets $\mathcal B$ of $k$ good blocks, and $(k-1)$-chains
$\mathcal B'$ on the separators. Namely, for every $i_1 < i_2 <
\cdots < i_k$ we map
\begin{equation*}
\mathcal B = \{ B_{i_1}, \ldots, B_{i_k}\} \leftrightarrow \
\mathcal B' = [Y_{i_1}, Y_{i_2-1}][Y_{i_2},Y_{i_3}-1]\cdots
[Y_{i_{k-1}},Y_{i_k-1}],
\end{equation*}
where the notation $[Y_a, Y_b]$ means $\{ Y_a, Y_{a+1}, \ldots,
Y_b\}$.

Let $\mathcal B = \{ B_{i_1}, \ldots, B_{i_k}\}$ be an
arbitrary such set. Let $\diag' = B_{i_1} \cup \cdots \cup
B_{i_k} \subseteq \diag$. Since $|\diag'| = n/r$ and $N$ is a
weak ${1\over r}$-net for $\diag$, it follows that
$\conv(\diag')$ must contain some point $x \in N$. By
Carath\'eodory's theorem, $x$ is contained in the convex hull
of some $d+1$ points of $\diag'$; let these points be $q_0,
\ldots, q_d$ from left to right.

Recall that for each coordinate $1\le j\le d$, the projection of $x$
into the $j$-th coordinate falls between two bad points of $\diag$.
Therefore, all the projections of $x$ fall \emph{between} good
blocks, and so we can associate with $x$ a $d$-tuple of separators
\begin{equation*}
x' = (Y_{a_1}, \ldots, Y_{a_d}).
\end{equation*}

Furthermore, none of the points $q_0, \ldots, q_d$ are bad, and
therefore they are \emph{far apart} from $x$ in each coordinate.
Therefore, Lemmas~\ref{lemma_Carath_stair} and
\ref{lemma_conv_sconv} apply, and so the $j$-th coordinate of $x$
must lie between the $j$-th coordinates of $q_{j-1}$ and $q_j$, for
every $j=1,2,\ldots,d$.

It follows that $q_0, \ldots, q_d$ belong to $d+1$ \emph{distinct}
blocks $B'_0, \ldots, B'_d$ of $\mathcal B$, and furthermore, their
relative order with the separators of $x'$ is
\begin{equation*}
B'_0, Y_{a_1}, B'_1, Y_{a_2}, \ldots, Y_{a_d}, B'_d.
\end{equation*}
In other words, the $d$-tuple $x'$ stabs the $(k-1)$-chain $\mathcal
B'$.

Thus, $N$ must have enough points to stab all $(k-1)$-chains (and so
all $k$-chains) with $d$-tuples in the range $[1,2d\ell-1] \supseteq
[1,\ell]$. Therefore,
\begin{equation*}
\ell = |N| \ge \z^d_{k}(\ell) = \z^d_{4d\ell/r}(\ell). \qedhere
\end{equation*}
\end{proof}

\begin{corollary}\label{cor_ell}
The quantity $\ell$ of Lemma \ref{lemma_ell_to_z_lower} must
satisfy
\begin{equation*}
\ell = \Omega{\bigl( r\cdot Q_d(\alpha(r) - 3) \bigr)},
\end{equation*}
for the function $Q_d$ of Theorem \ref{thm_z_j_lower}.
\end{corollary}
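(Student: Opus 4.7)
The plan is to combine Lemma~\ref{lemma_ell_to_z_lower} with the lower bound for stabbing interval chains (Theorem~\ref{thm_z_j_lower}), using Lemma~\ref{lemma_alpha_alpha_3} to make the Ackermann bookkeeping work. Set $m := \alpha(r) - 3$. Assume $r$ is large enough that $\alpha(r) \ge 4$ and that Theorem~\ref{thm_z_j_lower} can be applied at level $m$. I argue by contradiction: suppose $\ell < \frac{r}{4d}\,Q_d(m)$, so that $k := 4d\ell/r < Q_d(m)$.

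First I would invoke Theorem~\ref{thm_z_j_lower} with parameters $j = d$, $n = \ell$, and this $k$, obtaining
\begin{equation*}
\z^{(d)}_k(\ell) \;\ge\; c_d\,\ell\,\alpha_m(\ell) - c'_d\,\ell.
\end{equation*}
Combining with the inequality $\ell \ge \z^{(d)}_k(\ell)$ from Lemma~\ref{lemma_ell_to_z_lower} and dividing by $\ell$, this collapses to
\begin{equation*}
\alpha_m(\ell) \;\le\; \frac{1+c'_d}{c_d},
\end{equation*}
i.e., $\alpha_m(\ell)$ is bounded by a constant depending only on $d$.

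Next I would show that $\alpha_m(\ell)$ is in fact unbounded as $r \to \infty$, giving the contradiction. The trivial lower bound $\ell \ge r$ for weak $\tfrac1r$-nets applies here (split $\diag$ into $r$ consecutive groups of size $n/r$; their convex hulls are pairwise separable by coordinate hyperplanes, so each must contain a net point). Since $\alpha_m$ is nondecreasing, $\alpha_m(\ell) \ge \alpha_m(r) = \alpha_{\alpha(r)-3}(r)$, and Lemma~\ref{lemma_alpha_alpha_3} gives $\alpha_{\alpha(r)-3}(r) > A(\alpha(r) - 2)$, which tends to infinity with $r$. So for all sufficiently large $r$ the contradiction is achieved, forcing $\ell \ge \frac{r}{4d}\,Q_d(\alpha(r)-3) = \Omega\bigl(r\cdot Q_d(\alpha(r)-3)\bigr)$.

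The only delicate point I foresee is matching hypotheses: Theorem~\ref{thm_z_j_lower} requires $m \ge 3$, so I need $\alpha(r) \ge 6$, which holds for $r$ beyond a small constant, and the statement of Corollary~\ref{cor_ell} is asymptotic so this is harmless. Everything else is mechanical substitution; no new geometric insight is needed beyond what Lemma~\ref{lemma_ell_to_z_lower} already encapsulates.
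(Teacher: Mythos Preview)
Your proof is correct and follows essentially the same route as the paper: assume $\ell \le \frac{1}{4d}\,r\,Q_d(\alpha(r)-3)$, apply Lemma~\ref{lemma_ell_to_z_lower} and Theorem~\ref{thm_z_j_lower} to get $\ell \ge c_d\,\ell\,\alpha_{\alpha(r)-3}(\ell) - c'_d\,\ell$, then use $\ell \ge r$ and Lemma~\ref{lemma_alpha_alpha_3} to derive a contradiction for large $r$. Your extra remark about needing $m = \alpha(r)-3 \ge 3$ (hence $\alpha(r) \ge 6$) is a valid hypothesis check that the paper leaves implicit in ``for all large enough $r$''.
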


Note that Corollary \ref{cor_ell} implies Theorem~\ref{t:alphas}(i).

\begin{proof}
Suppose for a contradiction that $\ell \le {1\over 4d} r\cdot
Q_d(\alpha(r) - 3)$. Then $4d\ell / r \le Q_d(\alpha(r) - 3)$,
so by Lemma~\ref{lemma_ell_to_z_lower},
Theorem~\ref{thm_z_j_lower}, and
Lemma~\ref{lemma_alpha_alpha_3},
\begin{equation*}
\ell \ge \z^{(d)}_{4d\ell/r}(\ell) \ge \z^{(d)}_{Q_d(\alpha(r) - 3)}(\ell)
\ge c_d \ell \alpha_{\alpha(r) - 3}(\ell) - c'_d\ell
\ge c_d \ell \alpha_{\alpha(\ell) - 3}(\ell) - c'_d\ell = \omega(\ell).
\end{equation*}
(We have $\ell \ge r$ since every weak $1\over r$-net must
trivially have at least $r$ points.) This is a contradiction
for all large enough $\ell$, and so for all large enough $r$.
\end{proof}

\subsection{Proof of the upper bounds }

\begin{lemma}\label{lemma_to_z_upper}
The set $\diag = \diag(n)$ has a weak $1\over r$-net of size at most
$\z^{(d)}_{\ell/r - 1}(\ell)$, where $\ell$ is a free parameter.
\end{lemma}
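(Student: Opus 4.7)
The plan is to reverse the reduction of Lemma~\ref{lemma_ell_to_z_lower}: given an efficient stabbing set for interval chains, I construct a weak $\frac1r$-net for $\diag$ of the same size. Partition $\diag(n)$ into $\ell$ consecutive blocks $B_1,\ldots,B_\ell$ of size $n/\ell$ each (rounding is negligible for $n$ large), and let $Z$ be a minimum-size collection of $d$-tuples from $[1,\ell]$ that stabs every $(\ell/r-1)$-chain in $[1,\ell]$, so $|Z|=\z^{(d)}_{\ell/r-1}(\ell)$.

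For each increasing $d$-tuple $(a_1,\ldots,a_d)\in Z$, place one net point $x=x(a_1,\ldots,a_d)\in\R^d$ whose $j$-th coordinate lies strictly between the $j$-th coordinates of the last diagonal point of $B_{a_j}$ and the first diagonal point of $B_{a_j+1}$, positioned (using the fast growth of $\Gs$) so that $x$ is \emph{far apart} from every diagonal point. Let $N$ be the set of all these points, so $|N|\le\z^{(d)}_{\ell/r-1}(\ell)$.

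To check the net property, let $C$ be convex with $|C\cap\diag|\ge n/r$ and let $J\subseteq[n]$ index the diagonal points in $C$. Since each block holds $n/\ell$ diagonal points and $|J|\ge n/r$, at least $\ell/r$ blocks meet $J$; pick indices $i_1<\cdots<i_{\ell/r}$ of $\ell/r$ such blocks, yielding an $(\ell/r-1)$-chain on the associated separators. Some $(a_1,\ldots,a_d)\in Z$ stabs this chain; unpacking the definition, this produces indices $m_0<m_1<\cdots<m_d$ in $\{i_1,\ldots,i_{\ell/r}\}$ with $m_0\le a_1<m_1\le a_2<\cdots<a_d<m_d$. Choose a representative $q_s\in B_{m_s}\cap J\subseteq C$ for $s=0,\ldots,d$. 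Because the $j$-th diagonal coordinate is strictly monotone in the block index, the $j$-th coordinate of $q_s$ is $\le$ the $j$-th coordinate of $x$ exactly when $m_s\le a_j$. A direct check against the interleaving order above shows that for each $s=0,1,\ldots,d$ the point $q_s$ has \emph{type $s$} with respect to $x$ in the sense of Lemma~\ref{lemma_Carath_stair}; hence $x\in\sconv\{q_0,\ldots,q_d\}$. Since $x$ is far apart from each $q_s$, Lemma~\ref{lemma_conv_sconv} upgrades this to $x\in\conv\{q_0,\ldots,q_d\}\subseteq C$.

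The main obstacle is purely the bookkeeping in the previous paragraph: verifying that the interleaving $m_0\le a_1<m_1\le\cdots<m_d$ forced by the stabbing translates, via monotonicity of diagonal coordinates, into precisely one point $q_s$ of each type $0,1,\ldots,d$ with respect to $x$. The far-apart condition is handled automatically by the rapid growth of the coordinate scales of $\Gs$, and everything else is routine.
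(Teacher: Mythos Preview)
Your argument follows the paper's blueprint exactly: partition $\diag$ into $\ell$ blocks, convert an optimal $d$-tuple stabbing family for $(\ell/r-1)$-chains into net points sitting at the block boundaries, and invoke Lemmas~\ref{lemma_Carath_stair} and~\ref{lemma_conv_sconv}. The interleaving-to-types bookkeeping you flag as ``the main obstacle'' is correct and matches the paper.

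There is, however, one genuine technical slip. You place the $j$-th coordinate of $x$ strictly between the $j$-th coordinates of two \emph{consecutive} diagonal points (the last of $B_{a_j}$ and the first of $B_{a_j+1}$) and assert that ``the fast growth of $\Gs$'' lets you make $x$ far apart from every diagonal point. This is not guaranteed: the stretched-grid axioms only give $K_j\,x_{j,c}\le x_{j,c+1}$, and for the minimal choice $x_{j,c+1}=K_j\,x_{j,c}$ there is no value in the open interval $(x_{j,c},x_{j,c+1})$ that is $\myll{j}$-separated from both endpoints. Since your representative $q_s$ may be precisely one of these two boundary points (e.g.\ when $C$ meets $B_{m_s}$ only at an endpoint and $m_s\in\{a_j,a_j+1\}$), Lemma~\ref{lemma_conv_sconv} is not applicable as written. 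The paper handles this by \emph{reserving a pair of adjacent diagonal points} $Y_i=\{y_i,y'_i\}$ as a separator between consecutive blocks, excluded from the blocks themselves; the net coordinate is then placed between the two separator coordinates, so every block point lies at least two grid steps away in each coordinate, which does yield far-apartness. With that one-line amendment your proof goes through and is identical to the paper's.
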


\begin{proof}
Given $\ell$, partition $\diag$ into $\ell$ equal-sized blocks $B_1,
B_2, \ldots, B_{\ell}$ of consecutive points, leaving a pair of
adjacent points $Y_i = \{y_i, y'_i\}$ between every two consecutive
blocks $B_i$, $B_{i+1}$. We call the pairs of points $Y_1, \ldots,
Y_{\ell-1}$ ``separators". We assume $\ell$ is much smaller than
$n$, so the size of each block $B_i$ can be approximated by
$n/\ell$.

Consider a set $\diag' \subset \diag$ of size at least $n/r$.
$\diag'$ must contain a set $Q = \{q_1, \ldots, q_k\}$ of $k =
\ell/r$ points lying on $k$ different blocks $\mathcal B = \{
B_{i_1}, \ldots, B_{i_k} \}$, $i_1 < \cdots < i_k$. These blocks
define a $(k-1)$-chain of separators
\begin{equation*}
\mathcal B' = [Y_{i_1}, Y_{i_2 - 1}] [Y_{i_2}, Y_{i_3 - 1}] \cdots
[Y_{i_{k-1}}, Y_{i_k - 1}].
\end{equation*}
Let $Z$ be an optimal family of $d$-tuples of separators that stab
all $(k-1)$-chains of separators. We have $|Z| =
\z^{(d)}_{k-1}(\ell-1)$.

There must be a $d$-tuple $z = (Y_{a_1}, \ldots, Y_{a_d}) \in Z$
that stabs $\mathcal B'$. Therefore, there exist $d+1$ blocks $B'_0,
\ldots, B'_d \in \mathcal B$ such that the order between them and
the elements of $z$ is
\begin{equation*}
B'_0, Y_{a_1}, B'_1, Y_{a_2}, \ldots, Y_{a_d}, B'_d.
\end{equation*}
Let $q'_i$ be the point of $Q$ that lies in block $B'_i$, for $0\le
i\le d$.

Translate the $d$-tuple $z$ into a point $z' = (z'_1, \ldots, z'_d)
\in\R^d$ such that, for each $1\le i\le d$, the coordinate $z'_i$
lies between the $i$-th coordinates of the two points $y_{a_i}$,
$y'_{a_i}$ that constitute $Y_{a_i}$.

Then, since $z'$ is far from each of $q'_0, \ldots, q'_d$, it
follows from Lemmas~\ref{lemma_Carath_stair} and
\ref{lemma_conv_sconv} that $z'\in \conv \{q'_0, \ldots, q'_d\}
\subseteq \conv \diag'$.

Thus, the set $Z'\subset \R^d$ of all these points $z'$ for every
$z\in Z$ is a weak $1\over r$-net for $\diag$, and it has the
desired size.
\end{proof}

\begin{proof}[Proof of Theorem~\ref{t:alphas}(ii).]
Take $\ell = r\bigl(1 + P_d(\alpha(r)) \bigr)$, with $P_d$ as in
Theorem \ref{thm_z_j_upper}. Then
\begin{equation*}
\z^{(d)}_{\ell/r - 1}(\ell) = \z^{(d)}_{P_d(\alpha(r))}(\ell) \le
c''_d \ell \alpha_{\alpha(r)}(\ell),
\end{equation*}
which can be shown to be at most $4c''_d \ell$ by a simple argument.
\end{proof}

\section{Properties of stair-convexity and the transference lemma}
\label{sec_proofs}

In this section we prove Lemmas~\ref{lemma_Carath_stair} and
\ref{lemma_conv_sconv}, and then we use them to prove
Lemma~\ref{l:transfer} (the transference lemma). Along the way, we
establish other basic properties of stair-convexity.

Let us first introduce some notation. For a real number $y$ let
$h(y)$ denote the ``horizontal'' hyperplane $\{x\in\R^d: x_d=y\}$.
For a horizontal hyperplane $h=h(y)$ let $h^+:=\{x\in\R^d: x_d\ge
y\}$ be the upper closed half-space bounded by $h$, and let $h^-$ be
the lower closed half-space. For a set $S\subseteq\R^d$ let $S(y):=
S\cap h(y)$ be the horizontal slice of~$S$.

For a point $x=(x_1,\ldots,x_d)\in \R^d$ let $\overline x := (x_1,
\ldots, x_{d-1})$ be the projection of $x$ into $\R^{d-1}$, and
define $\overline S$ for $S \subset \R^d$ similarly. For a point
$x\in \R^{d-1}$ and a real number $x_d$, let $x \times x_d := (x_1,
\ldots, x_{d-1}, x_d)$, with a slight abuse of notation.

If $P$ and $Q$ are subsets of $\R^d$, we say that $P$ and $Q$
\emph{share the $i$-th coordinate} if $p_i = q_i$ for some $p\in P$,
$q\in Q$. Similarly, if $p\in \R^d$ and $Q\subset \R^d$, then we say
that $p$ and $Q$ share the $i$-th coordinate if $\{p\}$ and $Q$ do
so.

We begin with an equivalent, and perhaps somewhat more intuitive,
description of stair-convex sets.

\begin{lemma}\label{lemma_stconv}
A set $S \subseteq \R^d$ is stair-convex if and only if the
following two conditions hold:
\begin{enumerate}
\item[{\rm (SC1)}]
For every $y\in\R$, the set $\overline{S(y)}$ is a
$(d-1)$-dimensional stair-convex set.
\item[{\rm (SC2)}] {\rm (Slice-monotonicity)}
For every $y_1,y_2\in \R$ with $y_1\le y_2$ and
$S(y_2)\ne\emptyset$, we have $\overline{S(y_1)} \subseteq
\overline{S(y_2)}$.
\end{enumerate}
\end{lemma}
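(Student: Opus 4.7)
My plan is to prove both implications by induction-style use of the recursive definition of the stair-path, leveraging the fact that the path $\sigma(a,b)$ (with $a_d\le b_d$) is built from a vertical segment from $a$ up to $a':=(\overline a,b_d)$ followed by a $(d-1)$-dimensional stair-path inside the horizontal hyperplane $h(b_d)$.

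For the forward direction, assume $S$ is stair-convex. For (SC1), pick $a,b\in S(y)$; since $a_d=b_d=y$ the recursive definition gives $a'=a$ and the whole of $\sigma(a,b)$ sits in $h(y)$. Projecting to $\R^{d-1}$ kills the common last coordinate and turns $\sigma(a,b)$ into the $(d-1)$-dimensional stair-path between $\overline a$ and $\overline b$; since $\sigma(a,b)\subseteq S$ that projected path lies in $\overline{S(y)}$, proving $\overline{S(y)}$ is $(d-1)$-dim stair-convex. For (SC2), take $y_1\le y_2$ with $b\in S(y_2)$, and any $a\in S(y_1)$. Then $\sigma(a,b)$ begins with the vertical segment $aa'$ where $a'=(\overline a,y_2)$; by stair-convexity $a'\in S$, so $\overline a=\overline{a'}\in\overline{S(y_2)}$, giving $\overline{S(y_1)}\subseteq\overline{S(y_2)}$.

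For the reverse direction, suppose $S$ satisfies (SC1) and (SC2); take $a,b\in S$ with $a_d\le b_d$ and set $a':=(\overline a,b_d)$, so that $\sigma(a,b)=aa'\cup\sigma(a',b)$. Applying (SC2) with $y_1=a_d,y_2=b_d$ (valid since $S(b_d)\ni b$) yields $\overline a\in\overline{S(b_d)}$, hence $a'\in S$. For the second piece, the path $\sigma(a',b)$ lives in the hyperplane $h(b_d)$ and its projection to $\R^{d-1}$ is exactly the stair-path between $\overline{a'}=\overline a$ and $\overline b$, both of which lie in $\overline{S(b_d)}$; by (SC1), $\overline{S(b_d)}$ is $(d-1)$-dim stair-convex, so this projected path is in $\overline{S(b_d)}$, i.e.\ $\sigma(a',b)\subseteq S(b_d)\subseteq S$. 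Finally, for the vertical segment $aa'=\{(\overline a,t):t\in[a_d,b_d]\}$, each intermediate $(\overline a,t)$ must be shown to belong to $S$; one applies (SC2) with $y_1=a_d, y_2=t$ to conclude $\overline a\in\overline{S(t)}$, i.e.\ $(\overline a,t)\in S$.

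The main obstacle is the last step of the reverse direction: (SC2) only guarantees $\overline{S(y_1)}\subseteq\overline{S(y_2)}$ when the upper slice is nonempty, so I need to make sure the intermediate slice $S(t)$ is nonempty for every $t\in[a_d,b_d]$. The cleanest way I see is to argue by induction on $d$: (SC1) lets me reduce statements about $S(t)$ to statements about the $(d-1)$-dimensional stair-convex set $\overline{S(t)}$, and then I can combine (SC2) applied ``from below'' (with $y_2=b_d$ and $y_1=t$) with (SC2) applied ``from above'' (with $y_1=a_d$ and $y_2=t$) on the nested family of slice-projections. This bookkeeping between the two monotonicity directions is the only non-routine part; everything else is a direct unwinding of the stair-path definition.
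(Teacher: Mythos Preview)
Your overall strategy is the same as the paper's, and your forward direction is correct and essentially identical to the paper's argument. In the reverse direction you have put your finger on a genuine gap that the paper glosses over: the paper simply writes ``$aa'\subseteq S$ by (SC2)'' without checking that the intermediate slices $S(t)$, $t\in(a_d,b_d)$, are nonempty. Your proposed repair, however, cannot succeed. Applying (SC2) with $y_2=t$ already presupposes $S(t)\ne\emptyset$, which is exactly what you need; and applying it with $y_1=t$, $y_2=b_d$ only yields $\overline{S(t)}\subseteq\overline{S(b_d)}$, which says nothing about whether $S(t)$ is empty. There is no inductive rescue here either, since the difficulty concerns the last coordinate and does not reduce to a lower-dimensional statement.

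In fact the gap cannot be filled, because the reverse implication is false as stated. Take $d=2$ and $S=\{(0,0),(0,1)\}$. Every slice projection $\overline{S(y)}$ is either $\{0\}$ or $\emptyset$, so (SC1) holds; and (SC2) holds since whenever $S(y_2)\ne\emptyset$ one has $\overline{S(y_2)}=\{0\}\supseteq\overline{S(y_1)}$. Yet $\sigma((0,0),(0,1))=\{0\}\times[0,1]\not\subseteq S$, so $S$ is not stair-convex. The lemma becomes correct if one adds to (SC1)--(SC2) the harmless hypothesis that $\{y\in\R:S(y)\ne\emptyset\}$ is an interval (equivalently: $S(y_1),S(y_2)\ne\emptyset$ and $y_1\le t\le y_2$ imply $S(t)\ne\emptyset$). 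With that extra hypothesis your argument for $aa'\subseteq S$ goes through immediately, since then $S(t)\ne\emptyset$ for every $t\in[a_d,b_d]$ and (SC2) with $y_1=a_d$, $y_2=t$ gives $\overline a\in\overline{S(t)}$.
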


\begin{proof}
First let $S$ be stair-convex. Condition (SC1) is clear from the
definition of a stair-path. As for (SC2), we need to prove that for
every $a=(a_1,\ldots,a_d)\in S(y_1)$ the point
$a':=(a_1,\ldots,a_{d-1},y_2)$ directly above $a$ lies in $S(y_2)$.
But since $S(y_2)\ne\emptyset$, we can fix some $b\in S(y_2)$, and
then $a'$ lies on the stair-path $\sigma(a,b)$ and so $a'\in S(y_2)$
indeed.

Conversely, let $S\subseteq\R^d$ satisfy (SC1) and (SC2), and
let $a=(a_1,\ldots,a_d),b=(b_1,\ldots,b_d)\in S$ with $a_d\le
b_d$. Letting $a':=(a_1,\ldots,a_{d-1},b_d)$ be the point
directly above $a$ at the height of $b$ as in the definition of
the stair-path $\sigma(a,b)$, we have $\sigma(a',b)\subseteq S$
by the stair-convexity of $\overline{S(b_d)}$ and $aa'\subseteq
S$ by (SC2).
\end{proof}

\begin{lemma}\label{lemma_sconv}
The stair-convex hull of a set  $X\subseteq \R^d$ can be
(recursively) characterized as follows: For every horizontal
hyperplane $h=h(y)$ that does not lie entirely above $X$, let $X'$
stand for the vertical projection of $X\cap h^-$ into $h$. Then
$h\cap\sconv(X)=\sconv(X')$ (where $\sconv(X')$ is a stair-convex
hull in dimension $d-1$).
\end{lemma}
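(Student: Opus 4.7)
The plan is to prove both inclusions separately, invoking Lemma~\ref{lemma_stconv} for the nontrivial direction. Write $\overline{X'}$ for the projection of $X'$ into $\R^{d-1}$ and put $X^+ := \{x\in X : x_d > y\}$ with corresponding projection $\overline{X^+}$. One easy edge case first: if $X \cap h^- = \emptyset$ then $X' = \emptyset$, and since a stair-path cannot descend below the smaller of its endpoint heights, $\sconv(X)$ lies entirely in the open half-space $\{x_d > y\}$, so both sides of the claimed identity are empty. Henceforth assume $X \cap h^- \ne \emptyset$.

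For the inclusion $\sconv(X') \subseteq h \cap \sconv(X)$, the containment $\sconv(X') \subseteq h$ is immediate because stair-paths between two points of constant $d$-th coordinate $y$ stay at height $y$. To obtain $\sconv(X') \subseteq \sconv(X)$ it suffices to establish $X' \subseteq \sconv(X)$, which I would do by picking any $z \in X \cap h^+$ (available by the hypothesis that $h$ does not lie entirely above $X$) and observing that for each $x \in X \cap h^-$ the projection $x' = (\overline{x}, y)$ lies on the initial vertical segment of the stair-path $\sigma(x,z)$, hence in $\sconv(\{x,z\}) \subseteq \sconv(X)$.

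For the reverse inclusion $h \cap \sconv(X) \subseteq \sconv(X')$, I would exhibit a concrete stair-convex set $T \supseteq X$ whose slice at height $y$ coincides with $\sconv(X')$, and then appeal to the minimality of $\sconv(X)$. Define $T$ by its slices
\begin{equation*}
\overline{T(z)} := \begin{cases} \sconv(\overline{X'}) & \text{if } z \le y, \\ \sconv(\overline{X'} \cup \overline{X^+}) & \text{if } z > y. \end{cases}
\end{equation*}
By Lemma~\ref{lemma_stconv}, $T$ is stair-convex: (SC1) is immediate because each slice is itself a stair-convex hull, and (SC2) holds because $z \mapsto \overline{T(z)}$ is nondecreasing under set inclusion. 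Also $T \supseteq X$, since every $x \in X \cap h^-$ has $\overline{x} \in \overline{X'} \subseteq \sconv(\overline{X'})$ and every $x \in X^+$ has $\overline{x} \in \overline{X^+} \subseteq \sconv(\overline{X'} \cup \overline{X^+})$. Therefore $\sconv(X) \subseteq T$, and intersecting with $h$ (and identifying $h$ with $\R^{d-1}$) gives $h \cap \sconv(X) \subseteq h \cap T = \sconv(X')$.

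The main obstacle I anticipate is choosing the witness set $T$ for the backward direction. One cannot work with $\sconv(X)$ directly (that would be circular), and a crude outer approximation such as a product with a half-space fails to pin down the slice at height $y$. The two-tier definition above is engineered so that on and below $h$ the slice is already $\sconv(\overline{X'})$, yielding the desired upper bound on $h \cap \sconv(X)$, while above $h$ the slice is enlarged just enough to absorb $\overline{X^+}$ without perturbing the slice at $y$, so that slice-monotonicity is preserved and $T$ genuinely contains $X$.
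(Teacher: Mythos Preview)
Your argument is correct and follows the same two-inclusion strategy as the paper; the forward inclusion is proved identically. For the backward inclusion both you and the paper exhibit a stair-convex superset of $X$ whose slice at height~$y$ equals $\sconv(X')$, but the paper's witness is the simpler set $(\R^d\setminus h^-)\cup P^-(\sconv(X'))$, where $P^-$ is the downward-infinite prism---that is, it takes the \emph{entire} open upper half-space above~$h$ rather than your carefully tailored $\sconv(\overline{X'}\cup\overline{X^+})$. Your concern that ``a crude outer approximation \ldots\ fails to pin down the slice at height~$y$'' is therefore unwarranted: since $h\subseteq h^-$, the slice at~$y$ comes from the prism part and is exactly $\sconv(X')$, and stair-convexity of the union follows from Lemma~\ref{lemma_stconv} just as in your argument. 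Your tighter $T$ works perfectly well, but the cruder choice already suffices.
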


\begin{proof}
First we prove the inclusion $\sconv(X')\subseteq h\cap\sconv(X)$.
Let us fix a point $x_0\in X\cap h^+$ (i.e., above $h$ or on it),
and let $x$ be an arbitrary point of $X\cap h^-$. Then $x'$, the
vertical projection of $x$ into $h$, lies on the stair-path
$\sigma(x,x_0)$, and thus $X'\subseteq h\cap\sconv(X)$. Since
$h\cap\sconv(X)$ is stair-convex (by (SC1) in
Lemma~\ref{lemma_stconv}), we also have $\sconv(X')\subseteq
h\cap\sconv(X)$.

To establish the reverse inclusion, it suffices to show that for
every $(d-1)$-dimensional stair-convex $S'\subseteq h$ that contains
$X'$ there is a $d$-dimensional stair-convex set $S$ with $S\cap
h=S'$ that contains $X$. Such an $S$ can be defined as
$(\R^d\setminus h^-)\cup P^-(S')$, where
$P^-(S')=\{(x_1,\ldots,x_d)\in\R^d: (x_1,\ldots,x_{d-1},y)\in S',
x_d\le y\}$ is the semi-infinite vertical prism obtained by
extruding $S'$ downwards.
\end{proof}

Next, we prove Lemma~\ref{lemma_Carath_stair}, which asserts that a
point $x$ lies in the stair-convex hull of a set $X$ if and only if
$X$ contains a point of type $j$ with respect to~$x$ for every
$j=0,1,\ldots,d$.

\begin{proof}[Proof of Lemma~\ref{lemma_Carath_stair}. ]
Both directions follow by induction on $d$. The case $d=1$ is
trivial,  and so we assume $d\ge 2$.

Let $h$ be the horizontal hyperplane containing $x$. First we
suppose  $x \in \sconv(X)$. There exists a point $p_d\in X$
whose last coordinate is at least as large as that of $x$, and
this $p_d$ has type $d$ with respect to $x$.

Next, let $X'$ be the vertical projection of $X\cap h^-$ into
$h$ as in Lemma~\ref{lemma_sconv}. By that lemma we have $x\in
\sconv(X')$, and so, by induction, $X'$ contains points $p'_0,
\ldots, p'_{d-1}$ (not necessarily distinct) of types $0,
\ldots, d-1$, respectively, with respect to $x$. The
corresponding points $p_0, \ldots, p_{d-1} \in X$ also have
types $0, \ldots, d-1$ with respect to~$x$.

For the other direction, we suppose that there are points $p_0,
\ldots, p_d \in X$ of types $0, \ldots, d$ with respect to $x$.
Then the vertical projections of $p_0, \ldots, p_{d-1}$ into
$h$ also have types $0,\ldots, d-1$ w.r.t.\ $x$, and so by the
inductive hypothesis, their stair-convex hull contains $x$.
Since $p_d\in h^+$, it follows, again by
Lemma~\ref{lemma_sconv}, that $x\in\sconv(\{ p_0, \ldots,
p_d\})$.
\end{proof}

In order to prove Lemma~\ref{lemma_conv_sconv}, we first
establish some more properties of stair-convex hulls.

\begin{lemma}\label{lemma_share_coords}
Let $Q$ be a $k$-point set in $\R^d$ for some $k\le d+1$, and let
$p$ be a point in $\sconv(Q)$. Then $p$ shares at least $d-k+1$
coordinates with $Q$.
\end{lemma}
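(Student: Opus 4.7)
The plan is to apply Lemma~\ref{lemma_Carath_stair} and then use a pigeonhole/type-accounting argument. By that lemma, since $p\in\sconv(Q)$, the set $Q$ must contain a point of type $j$ with respect to $p$ for every $j\in\{0,1,\ldots,d\}$. Since $|Q|=k\le d+1$ but there are $d+1$ types to cover, I would pick, for each type $j$, one point $q\in Q$ that realizes it, thereby obtaining a partition $\{T_q\}_{q\in Q}$ of $\{0,1,\ldots,d\}$ into $k$ nonempty blocks, where $T_q$ is the set of types assigned to $q$.

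The key observation I would then prove is: \emph{if a single point $q\in Q$ has two distinct types $j_1<j_2$ with respect to $p$, then $q_{j_2}=p_{j_2}$.} This is an unpacking of the definitions: whether $j_1=0$ or $j_1\ge 1$, the conditions defining type $j_1$ force $q_i\le p_i$ for every $i>j_1$, and in particular $q_{j_2}\le p_{j_2}$; on the other hand, $q$ having type $j_2$ requires $q_{j_2}\ge p_{j_2}$. So equality holds.

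Applying this to each $q$ with $T_q=\{j_1<j_2<\cdots<j_s\}$, I get that the coordinates $j_2,\ldots,j_s$ of $p$ are shared with $q$, producing $|T_q|-1$ shared-coordinate indices per point. Summing over $Q$ yields
\begin{equation*}
\sum_{q\in Q}(|T_q|-1) \;=\; (d+1)-k \;=\; d-k+1
\end{equation*}
shared-coordinate occurrences. Because the sets $T_q$ are pairwise disjoint (they partition $\{0,1,\ldots,d\}$), the indices produced from different $q$'s are automatically distinct, so the $d-k+1$ counted occurrences correspond to $d-k+1$ distinct coordinates of $p$ that are matched by some point of $Q$, which is exactly what the lemma asserts.

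The only mildly subtle point is the verification that distinct $q$'s produce distinct shared indices; this is essentially for free from the disjointness of the partition, so the real work is just the type-comparison observation in the middle step, which is a direct consequence of the definition of type.
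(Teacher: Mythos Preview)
Your argument is correct and proceeds by a genuinely different route than the paper. The paper proves the lemma by induction on~$d$, using the recursive description of $\sconv$ from Lemma~\ref{lemma_sconv}: it strips off the highest point of $Q$ (or not, depending on whether $p_d$ equals the top coordinate) and reduces to dimension $d-1$. You instead go straight through the type characterization of Lemma~\ref{lemma_Carath_stair} and count: each point of $Q$ that is forced to cover several types must, by the definition of type, agree with $p$ in all but the smallest of those type indices. Your approach is a bit more combinatorial and avoids setting up a fresh induction; the paper's approach stays closer to the recursive structure it has already built up and reuses Lemma~\ref{lemma_sconv}.

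One small imprecision worth cleaning up: when you choose, for each type $j$, a witness $q\in Q$, the resulting family $\{T_q\}_{q\in Q}$ is a partition of $\{0,1,\ldots,d\}$ into \emph{at most} $k$ nonempty blocks, not necessarily exactly~$k$. This does not harm the argument---if $k'\le k$ blocks are nonempty, then the number of distinct shared-coordinate indices you produce is $\sum_{q:\,T_q\ne\emptyset}(|T_q|-1)=(d+1)-k'\ge d-k+1$, which is all the lemma requires. (Equivalently, your displayed sum $\sum_{q\in Q}(|T_q|-1)=(d+1)-k$ is a lower bound for the true count, since empty blocks contribute $-1$ to the sum but $0$ to the count.)
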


\begin{proof}
By induction on $d$. Let $q$ be the highest point of $Q$. First
suppose $p_d = q_d$. Then, $p$ shares the last coordinate with $Q$.
Further, by induction, $\overline p$ shares at least $d-k$
coordinates with $\overline Q$, so $p$ also shares at least $d-k$
out of the first $d-1$ coordinates with $Q$, and we are done.

Next suppose $p_d < q_d$. In this case, let $Q' = Q \setminus
\{q_d\}$. Then, by Lemma~\ref{lemma_sconv} and by induction,
$\overline p$ shares at least $d-(k-1)$ coordinates with
$\overline{Q'}$, so the same is true of $p$ and $Q'$.
\end{proof}

Kirchberger's theorem \cite{kirch} (see also \cite[p.~13]{matou_DG})
states that if $P$ and $Q$ are point sets in $\R^d$ such that
$\conv(P)$ and $\conv(Q)$ intersect, then there exist subsets
$P'\subseteq P$ and $Q'\subseteq Q$ of total size $|P| + |Q| \le
d+2$ such that $\conv(P')$ and $\conv(Q')$ intersect. The following
is an analogous result for stair-convex sets.

\def\toofew{(a)}
\def\justenough{(b)}
\def\toomany{(c)}

\begin{lemma}\label{lemma_kirch_stconv}
Let $P, Q\subset \R^d$ be two finite point sets that do not share
any coordinate, with $|P| = s$ and $|Q| = t$. Then:
\begin{enumerate}
\item[\rm\toofew]
If $s+t < d+2$, then $\sconv(P)$ and $\sconv(Q)$ do not intersect.

\item[\rm\justenough]
If $s+t = d+2$ and $\sconv(P)$, $\sconv(Q)$ intersect, then they do
so at a single point. Furthermore, the two highest points of $P \cup
Q$ belong one to $P$ and one to $Q$.

\item[\rm\toomany]
If $s+t \ge d+2$ and $\sconv(P)$, $\sconv(Q)$ intersect, then there
exist subsets $P' \subseteq P$, $Q' \subseteq Q$ of total size $|P'|
+ |Q'| = d+2$, such that $\sconv(P')$, $\sconv(Q')$ intersect.
\end{enumerate}
\end{lemma}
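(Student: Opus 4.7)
For part~(a), my plan is a pigeonhole via Lemma~\ref{lemma_share_coords}: any common point $x\in\sconv(P)\cap\sconv(Q)$ shares at least $d-s+1$ coordinates with $P$ and at least $d-t+1$ with $Q$, and since $P$ and $Q$ share no coordinate these two sets of coordinate axes are disjoint. Hence $(d-s+1)+(d-t+1)\le d$, giving $s+t\ge d+2$ and contradicting the hypothesis.

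For part~(b) I handle the two claims separately. For the ``two highest'' statement~(ii), I argue by contradiction: if the two highest points of $P\cup Q$ both lie in $P$, then any $x\in\sconv(Q)$ satisfies $x_d\le\max_{q\in Q}q_d$, which is strictly below the second-highest of $P\cup Q$; hence $|P\cap h^-(x_d)|\le s-2$. Lemma~\ref{lemma_sconv} then yields $\overline x\in\sconv(\overline{P\cap h^-})\cap\sconv(\overline{Q\cap h^-})$ in $\R^{d-1}$, and applying part~(a) in $\R^{d-1}$ forces $(s-2)+t\ge d+1$, contradicting $s+t=d+2$. For the uniqueness~(i), the inequality of~(a) is tight at $s+t=d+2$, so every $x$ in the intersection has all $d$ coordinate axes exactly filled by $P$ or $Q$. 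If $x\ne x'$ both lay in the intersection (which is stair-convex, as an intersection of stair-convex sets), then $\sigma(x,x')$ would lie there too and contain a non-degenerate axis-parallel segment; along it the varying coordinate takes infinitely many values while $P\cup Q$ offers only finitely many candidates, contradicting the tight count of $d$ shared coordinates at interior points.

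For part~(c) I proceed by induction on $d$, freely using (a) and (b). First apply Carath\'eodory (Lemma~\ref{lemma_Carath_stair}) to reduce to $|P|,|Q|\le d+1$. Assuming henceforth $s+t\ge d+3$, let $r\in P$ be the highest point of $P\cup Q$ (WLOG) and $r'\in Q$ the highest point of $Q$. Using slice monotonicity~(SC2) I push $x$ vertically to height $r'_d$. Projecting at $h:=h(r'_d)$ via Lemma~\ref{lemma_sconv} gives $\overline x\in\sconv(\overline{P\cap h^-})\cap\sconv(\overline Q)$ in $\R^{d-1}$, to which the inductive hypothesis applies; it produces $\widetilde P\subseteq P\cap h^-$ and $\widetilde Q\subseteq Q$ (lifted from their $(d{-}1)$-dimensional counterparts) of total size $d+1$ whose projected stair-hulls meet. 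I set $P':=\widetilde P\cup\{r\}$ and $Q':=\widetilde Q\cup\{r'\}$, with $r'$ absorbed if already present in the lifted $\widetilde Q$.

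The main obstacle is this last lifting step: its size is $d+2$ exactly when $r'\in\widetilde Q$ and $d+3$ otherwise. To force the target size I plan a dichotomy paired with a secondary induction on $s+t$. If $r'$ is \emph{essential} in the $(d{-}1)$-dimensional instance (i.e.\ $\sconv(\overline{P\cap h^-})\cap\sconv(\overline{Q\setminus\{r'\}})=\emptyset$), then every valid $\widetilde Q$ produced by the IH must contain $\overline{r'}$ and the lift automatically reaches $d+2$. Otherwise I would aim to show that $\sconv(P)\cap\sconv(Q\setminus\{r'\})\ne\emptyset$ in $\R^d$---producing the required intersection by slice-monotone descent from the $(d{-}1)$-dimensional witness---and then recurse on the strictly smaller instance $(P,Q\setminus\{r'\})$. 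The delicate point, where I expect most of the technical effort, is the inessential branch: one must argue that the $(d{-}1)$-dimensional intersection can actually be realized at a height not exceeding $\max_{q\in Q\setminus\{r'\}}q_d$, possibly by choosing a different hyperplane (for instance $h(\max_{p\in P\setminus\{r\}}p_d)$ when the second-highest of $P\cup Q$ lies in $P$) and re-running the projection analysis.
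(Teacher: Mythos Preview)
Your arguments for parts~(a) and~(b) are correct and essentially match the paper's. For~(b)(ii) the paper argues slightly differently but equivalently: if the two highest points $a,a'$ of $P\cup Q$ both lie in~$P$, then since $x_d\le\max_{q\in Q}q_d<a'_d<a_d$ one has $x\in\sconv(P\setminus\{a\})$ directly ($a'$ still serves as a type-$d$ witness), contradicting part~(a) in $\R^d$ with total size $(s-1)+t=d+1$.

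For part~(c), your approach is more complicated than necessary, and the ``inessential'' branch you flag is a genuine gap. The obstruction is real: after projecting at height $r'_d$ and then trying to descend to the top of $Q\setminus\{r'\}$, any point of $P$ whose last coordinate lies strictly between these two heights may be essential for the $(d{-}1)$-dimensional intersection yet absent at the lower level, so $\sconv(P)\cap\sconv(Q\setminus\{r'\})$ need not be nonempty. Your suggested alternative hyperplanes do not obviously repair this in general.

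The paper avoids the whole dichotomy by two cleaner choices: project at the height of the given intersection point~$x$ itself, and reinstate only \emph{one} top point rather than both. Concretely, with $h=h(x_d)$, $P^-=P\cap h^-$, $Q^-=Q\cap h^-$, the inductive hypothesis in $\R^{d-1}$ yields $P_0^-\subseteq P^-$, $Q_0^-\subseteq Q^-$ of total size $d+1$ and a point $r\in\sconv(\overline{P_0^-})\cap\sconv(\overline{Q_0^-})$. Let $a$ and $b$ be the highest points of $P_0^-$ and $Q_0^-$, and say $a_d<b_d$. Set $P_0:=P_0^-\cup\{p_{\tp}\}$, $Q_0:=Q_0^-$, and lift $r$ to height~$b_d$. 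All of $P_0^-$ lies strictly below $b_d$, while $p_{\tp}$ lies strictly above (since $b_d\le x_d\le (p_{\tp})_d$ and $b\in Q$, $p_{\tp}\in P$ share no coordinate), so Lemma~\ref{lemma_sconv} gives $r\times b_d\in\sconv(P_0)$; and since $b_d$ is the top of $Q_0^-$, also $r\times b_d\in\sconv(Q_0)$. As $p_{\tp}\notin P_0^-$, the total size is exactly $d+2$. The trick you missed is that by lifting to the top of the \emph{higher} of the two $(d{-}1)$-dimensional subsets, that side needs no augmentation at all; only the other side acquires its global top point.
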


\begin{proof}
Let us first prove parts \toofew\ and \justenough. Suppose
$\sconv(P)$, $\sconv(Q)$ intersect, and let $x\in \R^d$ belong to
their intersection. If $s+t < d+2$, then, since $P$ and $Q$ do not
share any coordinate, Lemma~\ref{lemma_share_coords} implies that
$x$ shares a total of at least $d+1$ coordinates with $P \cup Q$,
which is impossible. Part \toofew\ follows.

Now suppose $s+t = d+2$; then, Lemma~\ref{lemma_share_coords}
implies that $x$ shares all $d$ coordinates with $P \cup Q$. If
there were another point $x'$ in $\sconv(P) \cap \sconv(Q)$, then
the same would be true of every point in $\sigma(x, x')$, which is
impossible. Let $a$, $a'$ be the two highest points of $P \cup Q$.
If they both belonged to $P$, say, then $\sconv( P \setminus \{a\}
)$ and $\sconv(Q)$ would still intersect, contradicting part
\toofew. Thus, part \justenough\ follows.

We now prove part \toomany\ by induction on $d$. The case $d=1$ is
clear, so let $d\ge 2$. Suppose $s+t\ge d+2$, and let $p_{\tp}$ and
$q_{\tp}$ be the highest points of $P$ and $Q$, respectively.

Let $x\in\sconv(P)\cap\sconv(Q)$, and let $h=h(x_d)$ be the
horizontal hyperplane containing $x$. Then, with $P^-:=P\cap h^-$
and $Q^-:=Q\cap h^-$, we have $\overline x\in \sconv(\overline
{P^-}) \cap\sconv(\overline{Q^-})$ by Lemma~\ref{lemma_sconv}.

By the inductive hypothesis there are subsets $P^-_0\subseteq P^-$
and $Q^-_0\subseteq Q^-$ with $|P^-_0|+|Q^-_0|=d+1$ and
$\sconv(\overline{P^-_0})\cap \sconv(\overline{Q^-_0})\ne\emptyset$.
Let $r\in\R^{d-1}$ be a point in this intersection (it need not be
identical to $\overline x$).

Let $a$ and $b$ be the highest points of $P^-_0$ and $Q^-_0$,
respectively. If $a$ lies below $b$, we set
$P_0:=P^-_0\cup\{p_{\tp}\}$ and $Q_0:=Q^-_0$; it is easily checked
that $r\times b_d\in \sconv(P_0)\cap \sconv(Q_0)$. Finally, if $a$
lies above $b$, we set $P_0:=P^-_0$ and $Q_0:=Q^-_0\cup\{q_{\tp}\}$;
then $r\times a_d\in \sconv(P_0)\cap \sconv(Q_0)$.
\end{proof}

We are now ready to prove Lemma~\ref{lemma_conv_sconv}, which states
that if $P, Q \subset \B$ are far apart, then $\sconv(P)$ and
$\sconv(Q)$ intersect if and only if $\conv(P)$ and $\conv(Q)$ do
so.

\begin{proof}[Proof of lemma~\ref{lemma_conv_sconv}]
Let $P,Q\subset\B$ be point sets that are far apart. Then, in
particular, $P$ and $Q$ do not share any coordinate (as is assumed
in Lemma~\ref{lemma_kirch_stconv}).

First we prove that $\sconv(P) \cap \sconv(Q) \neq \emptyset$
implies $\conv(P) \cap \conv(Q) \neq \emptyset$. The proof proceeds
by induction on $d$. In the inductive step we discard the point of
the largest height, and find an intersection of the convex hulls of
the remaining sets, which is a $(d-1)$-dimensional situation. Then
we would like to use the the discarded point for adjusting the last
coordinate of the intersection. In order to make this last step
work, instead of simply discarding the highest point, we use it to
perturb the other points.

Let us proceed in detail. By Lemma~\ref{lemma_kirch_stconv}\toomany,
we may assume that $P=\{p_1,\dotsc,p_s\}$ and
$Q=\{q_1,\dotsc,q_t\}$, with $s+t=d+2$. Let $y$ be a point in
$\sconv(P) \cap \sconv(Q)$. Let $p_s$, $q_t$ be the highest points
in $P$, $Q$, respectively, and let us assume $q_t$ lies above $p_s$.
By Lemma~\ref{lemma_kirch_stconv}\justenough, the set $Q^* := \{q_1,
\ldots, q_{t-1} \}$ lies below $p_s$.

We show by induction on $d$ that the following system of equations
and inequalities with unknowns $a_1,\dotsc,a_s$, $b_1,\dotsc,b_t$
has a solution:
\begin{subequations}
\begin{align}
a_1+\dotsb+a_s&=b_1+\dotsb+b_t, \label{eq_a_eq_b}=1\\
a_1 p_1+\dotsb+a_s p_s&=b_1 q_1+\dotsb+b_t q_t, \label{eq_cc_p_cc_q}\\
a_1,\dotsc,a_s,b_1,\dotsc,b_t&\geq 1/x_{dm}\label{conv_sconv_ineq},
\end{align}
\end{subequations}
where, as  we recall, $x_{dm}$ is the maximum height of a point in
$\B$. Equations (\ref{eq_a_eq_b}) and (\ref{eq_cc_p_cc_q}) assert
that $\conv(P)\cap\conv(Q)\neq \emptyset$, and the inequalities
(\ref{conv_sconv_ineq}) are crucial in the induction argument.

The case $d=1$ is an easy computation, and so we assume $d\geq 2$.

Let $\alpha>0$ be a parameter, and for each $q_i\in Q^*$ define the
``perturbed" point $q_i'=(1-\alpha)q_i+\alpha q_t$, which lies on
the segment $q_i q_t$. Let $Q'=\{q_1',\dotsc,q_{t-1}'\}$. Since
$q_t$ is very high above $Q^*$, the segments $q_i q_t$ are ``almost"
vertical. As we will see, it is possible to choose the parameter
$\alpha$ large enough so that $Q'$ lies above $p_s$, and yet small
enough so that $\overline{Q'}$ is not ``too far" from
$\overline{Q^*}$.

Specifically, we will choose $\alpha \in [1/x_{dm},1/2 x_{(d-1)m}]$.
We claim that for any such choice of $\alpha$, the set
$\overline{Q'}$ is far apart from $\overline{P}$ and
$\sconv(\overline{Q'})$ intersects $\sconv(\overline{P})$ iff
$\sconv(\overline{Q^*})$ does.

To see this, recall that $A\myll i B$ means $K_i A \le B$ with
$K_i=2^d x_{(i-1)m}$. To indicate the dependence on $d$, we
temporarily adopt the more verbose notation $A \ll_{i,d} B$ in place
of $A \myll i B$. Since $\alpha \le 1/2 x_{(d-1)m}$, for every
$i=1,2,\ldots,t$ and every $k=1,2,\ldots,d-1$ we have $\alpha
q_{ik}\leq \tfrac{1}{2}$, and so
\begin{equation}\label{eq_ordpreserv}
\begin{aligned}
q_{ik} \ll_{k,d} p_{jk} &\implies q_{ik}' = (1-\alpha)q_{ik}+\alpha
q_{tk}\leq q_{ik}+\tfrac{1}{2}\leq
 2 q_{ik} \ll_{k,d-1} p_{jk},\\
\text{and}\qquad q_{ik} \gg_{k,d} p_{jk} &\implies
q_{ik}'=(1-\alpha)q_{ik}+\alpha q_{tk}\geq q_{ik}-\tfrac{1}{2}\geq
\tfrac{1}{2}q_{ik} \gg_{k,d-1} p_{jk}.
\end{aligned}
\end{equation}
So indeed, there is no combinatorial change between $\overline{
Q^*}$ and $\overline{Q'}$ as far as intersection of stair-convex
hulls with $\sconv(\overline{P})$ is concerned.

Further, since $q_t$ is the highest point in $P \cup Q$, we have
$\overline y \in\sconv(\overline P) \cap \sconv(\overline{ Q^*})$,
and therefore, $\sconv(\overline P) \cap \sconv(\overline{ Q'})
\neq\emptyset$. Hence, by the induction hypothesis there exists a
point $r\in \conv( \overline P) \cap \conv(\overline{Q'})$ and
further, there exist real numbers
$a_1,\dotsc,a_s,b_1,\dotsc,b_{t-1}$ satisfying
\begin{align}
a_1+\dotsb+a_s&=b_1+\dotsb+b_{t-1}=1, \nonumber\\
a_1 \overline{p}_1+\dotsb+a_s \overline{p}_s &= b_1
\overline{q}'_1+\dotsb+b_{t-1} \overline{q}'_{t-1}=r,
 \label{eq_cc_r}\\
a_1,\dotsc,a_s,b_1,\dotsc,b_{t-1}&\geq 1/x_{(d-1)m}. \nonumber
\end{align}

Thus, there are real numbers $h_P$ and $h_Q$ for which $r\times
h_P\in \conv(P)$ and $r\times h_Q\in\conv(Q)$. Now we exploit the
freedom in choosing $\alpha$. First let $\alpha:=1/2x_{(d-1)m}$;
then we have $q'_{id} > \alpha q_{td}=q_{td}/2x_{(d-1)m} \ge p_{sd}$
(since $q_{td} \mygg{d} p_{sd}$). Thus, $Q'$ lies entirely above
$p_s$, implying that $h_Q>h_P$ in this case.

Next, let $\alpha:=1/x_{dm}$. Since $p_{sd} \mygg{d} q_{id}$ for
every $i=1,2,\ldots,t-1$, we have $a_s p_{sd}\geq
p_{sd}/x_{(d-1)m}>q_{id} + 1 \ge q_{id} + \alpha q_{td} > q'_{id}$.
Hence
\begin{equation*}
a_1 p_{1d} + \dotsb + a_s p_{sd} > a_s p_{sd} > b_1 q'_{1d} + \dotsb
+ b_{t-1} q'_{(t-1)d},
\end{equation*}
implying that $h_P>h_Q$ in this case.

Since solutions of linear equations depend continuously on the
coefficients, the intermediate value theorem implies that there is
an $\alpha$ in the interval $[1/x_{dm},1/2x_{(d-1)m}]$ for which
$h_Q=h_P$. Fix this $\alpha$. Then the point $r\times h_P=r\times
h_Q$ lies in both $\conv(P)$ and $\conv(Q)$, as desired. It remains
to verify the inequalities~\eqref{conv_sconv_ineq}. We have
\begin{align*}
r\times h_P&=a_1 p_1+\dotsb+a_s p_s,\\
r\times h_Q&=(1-\alpha)b_1 q_1+\dotsb+(1-\alpha)b_{t-1}
q_{t-1}+\alpha q_t.
\end{align*}
Then $a_i\geq 1/x_{dm}$ follows from $a_i\geq 1/x_{(d-1)m}$; the
inequality $(1-\alpha)b_i\geq 1/x_{dm}$ follows since $1-\alpha\geq
1/2$ and by the definition of $x_{dm}$; and $\alpha\geq 1/x_{dm}$
holds by our very choice of $\alpha$. The first implication in
Lemma~\ref{lemma_conv_sconv} is proved.

\medskip

We now tackle the reverse implication, again proceeding by induction
on $d$. Let us suppose that $\conv(P) \cap \conv(Q)\neq\emptyset$.
By Kirchberger's theorem, we can assume that $\abs{P}+\abs{Q}\le
d+2$. As above let $P=\{p_1,\dotsc,p_s\}$, $Q=\{q_1,\dotsc,q_t\}$,
with points $p_s$ and $q_t$ highest in their respective sets, and
assume $q_t$ lies above $p_s$.

Let $r\in \conv(P)\cap \conv(Q)$. Then there exist nonnegative
coefficients $a_1, \dotsc a_s$, $b_1, \dotsc, b_{t-1}$, and $\alpha$
that satisfy
\begin{align*}
r = a_1 p_1+\dotsb+a_s p_s&=b_1 q_1+\dotsb+b_{t-1} q_{t-1} + \alpha q_t,\\
a_1+\dotsb+a_s&=b_1+\dotsb+b_{t-1} + \alpha=1.
\end{align*}
Since $\sum a_i p_{id}\leq p_{sd}$, it follows that $\alpha\leq
p_{sd}/q_{td}\leq 1/2x_{(d-1)m}$.

As in the proof of the first implication,  let
$Q^*:=\{q_1,\ldots,q_{t-1}\}$, let $q'_i = (1-\alpha) q_i + \alpha
q_t$ (with the $\alpha$ just introduced), and let $Q' = \{q'_1,
\dotsc, q'_{t-1} \}$. Then $r$ is a convex combination of the points
in $Q'$, so $r\in\conv(Q')$.

Therefore, $\conv(\overline{P}) \cap \conv(\overline{Q'}) \neq
\emptyset$, and so by the induction hypothesis
$\sconv(\overline{P})\cap\sconv(\overline{Q'})\neq\emptyset$. But
arguing again as in \eqref{eq_ordpreserv}, the order of points of
$\overline{Q'}$ with respect to $\overline{P}$ is same as that of
$\overline{Q^*}$ with respect to $\overline{P}$ in each coordinate;
therefore $\sconv(\overline{P})$ and $\sconv(\overline{Q^*})$ must
also intersect; let $y\in\R^{d-1}$ belong to their intersection. We
claim that $p_s$ lies above $Q^*$; this is enough, since it implies
that $y\times p_{sd} \in \sconv(P) \cap \sconv(Q)$.

Suppose it is not the case, and that $q_{t-1}$, say, lies above
$p_s$. Since $r\in\conv(Q)$, there exists a point $q^\circ$ in the
segment $q_{t-1}q_t$ such that $r \in \conv(Q^\circ)$, where
$Q^\circ =\{q_1, \dotsc, q_{t-2}, q^\circ\}$.

Thus, $\conv(P)\cap \conv(Q^\circ)\ne\emptyset$. Since $q^\circ$
lies above $p_s$, we can apply the preceding argument with $Q^\circ$
in place of $Q$, and we infer that $\sconv(\overline{P}) \cap
\sconv( \overline{\{q_1, \dotsc, q_{t-2} \}})\ne\emptyset$. However,
these sets have a total of only $d$ points, contradicting
Lemma~\ref{lemma_kirch_stconv}\toofew.
\end{proof}

Next, we derive auxiliary results needed for the proof of
transference lemma (Lemma~\ref{l:transfer}). Given sets $P, Q
\subseteq \R^d$, we define the operation
\begin{equation*}
P \ominus Q := \{ p\in P: p+Q \subseteq P\},
\end{equation*}
where $p+Q=\{p+q:q\in Q\}$.

\begin{lemma}\label{l:addcube}
Let $S\subseteq \I^d$ be a stair-convex set, and let $\Gu = \Gu(m)$
be the uniform grid of side $m$. Then, for every $\delta>0$, the set
$S^{\delta-}:= S\ominus [0,\delta]^d$ is stair-convex,
\begin{equation*}
\vol(S^{\delta-})\ge \vol(S)-d\delta, \qquad \text{and}\qquad
|S^{\delta-}\cap\Gu|\ge |S\cap\Gu|- d\lceil(m-1)\delta\rceil
m^{d-1}.
\end{equation*}
\end{lemma}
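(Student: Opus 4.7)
The plan is to exploit Lemma~\ref{lemma_Carath_stair} to identify $S^{\delta-}$ with the intersection of $S$ and $d$ of its translates, which reduces everything to a one-dimensional argument along axis-parallel fibers.

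The key observation I would prove first is that, for any $p\in\R^d$, the stair-convex hull of the $(d+1)$-point set $P := \{p,\, p+\delta e_1,\, \ldots,\, p+\delta e_d\}$ equals the closed axis-parallel box $p+[0,\delta]^d$. One inclusion is immediate since $p+[0,\delta]^d$ is itself stair-convex and contains $P$. For the reverse inclusion, given $x = p+v$ with $v\in[0,\delta]^d$, the point $p$ has type $0$ with respect to $x$, while $p+\delta e_j$ has type $j$ with respect to $x$ (its $j$-th coordinate is $p_j+\delta\ge p_j+v_j = x_j$, and its coordinates of index $i>j$ equal $p_i\le x_i$); Lemma~\ref{lemma_Carath_stair} then yields $x\in\sconv(P)$. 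Combined with the stair-convexity of $S$, this gives the equivalence $p+[0,\delta]^d\subseteq S \iff P\subseteq S$, i.e.,
\begin{equation*}
S^{\delta-} \;=\; S \,\cap\, \bigcap_{i=1}^{d}(S-\delta e_i).
\end{equation*}
Stair-convexity of $S^{\delta-}$ is then immediate, since translations preserve stair-convexity and intersections of stair-convex sets are stair-convex.

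For the volume bound, I would estimate $S\setminus S^{\delta-} \subseteq \bigcup_{i=1}^{d}(S\setminus(S-\delta e_i))$ by working on axis-parallel fibers. Any line parallel to $e_i$ meets the stair-convex set $S$ in an interval $[l,r]$, and the portion of this interval removed on passing to $S\cap(S-\delta e_i)$ is the top sub-interval $(r-\delta,r]\cap[l,r]$, of length at most $\delta$. Integrating over the $d-1$ remaining coordinates in $[0,1]$ gives $\vol(S\setminus(S-\delta e_i))\le\delta$, hence $\vol(S^{\delta-})\ge\vol(S)-d\delta$.

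For the grid count, there are $m^{d-1}$ fibers of $\Gu$ parallel to $e_i$, and on each such fiber the lost interval of length at most $\delta$ contains at most $\lceil(m-1)\delta\rceil$ grid points (using the elementary estimate $\lfloor y\rfloor-\lfloor x\rfloor\le\lceil y-x\rceil$ with grid spacing $1/(m-1)$). Summing over the $d$ coordinate directions yields $|S^{\delta-}\cap\Gu|\ge|S\cap\Gu|-d\lceil(m-1)\delta\rceil m^{d-1}$. The only step with any real content is the box characterization of $\sconv(P)$, which drops out of Lemma~\ref{lemma_Carath_stair} cleanly; the rest is routine one-dimensional bookkeeping on fibers.
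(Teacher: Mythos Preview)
Your proof is correct. The paper's argument is organized slightly differently but arrives at the same fiber-wise estimates: it writes $S\ominus[0,\delta]^d = (\cdots((S\ominus s_1(\delta))\ominus s_2(\delta))\cdots)\ominus s_d(\delta)$ with $s_i(\delta)$ the segment $[0,\delta]e_i$, and applies the one-coordinate bound $d$ times in succession; stair-convexity of $S\ominus D$ is obtained for \emph{arbitrary} $D$ directly from the translation-invariance $\sigma(a+x,b+x)=x+\sigma(a,b)$ of stair-paths. Your route---identifying $p+[0,\delta]^d$ with $\sconv\{p,\,p+\delta e_1,\ldots,p+\delta e_d\}$ via Lemma~\ref{lemma_Carath_stair} and thereby obtaining the clean intersection formula $S^{\delta-}=S\cap\bigcap_{i=1}^d(S-\delta e_i)$---is a pleasant alternative that avoids the iteration and makes the union bound on $S\setminus S^{\delta-}$ immediate, at the (harmless) cost of being specific to the cube $[0,\delta]^d$ rather than a general Minkowski subtrahend. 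The actual loss estimates in both versions reduce to the same one-dimensional computation on axis-parallel fibers.
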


\begin{proof}
For an index $i\in\{1,2,\ldots,d\}$ and $\delta>0$ let $s_i(\delta)$
be the initial closed segment of the positive $x_i$-axis (starting
at the origin) of length $\delta$.

We prove that for every stair-convex $S\subseteq \I^d$ and every
$\delta>0$ the set $S':=S\ominus s_i(\delta)$ is stair-convex, has
volume at least $\vol(S)-\delta$, and contains at least $|S\cap\Gu|-
\lceil(m-1)\delta\rceil m^{d-1}$ points of $\Gu$. The assertion of
the lemma then follows by $d$-fold application of this statement and
by noticing that $S\ominus [0,\delta]^d=S\ominus
s_1(\delta)\ominus\cdots \ominus s_d(\delta)$.

As for the stair-convexity of $S'$, the following actually holds: If
$S$ is stair-convex and $D$ is arbitrary, then $S\ominus D$ is
stair-convex too. This follows from the translation invariance of
stair-paths. Namely, $\sigma(a+x,b+x)=x+\sigma(a,b)$, and thus for
$a,b\in S\ominus D$ we have $a+x$ and $b+x$ in $S$ for all $x\in D$,
so $x+\sigma(a,b)=\sigma(a+x,b+x)\subseteq S$, and thus
$\sigma(a,b)\subseteq S\ominus D$.

The claim about $\vol(S')$ follows by Fubini's theorem, since
$S\setminus S'$ intersects every line parallel to the $x_i$-axis in
a single segment of length at most~$\delta$. The claim about the
number of grid points follows similarly, by noticing that the grid
$\Gu(m)$ has step $\frac1{m-1}$ and thus $S\setminus S'$ contains at
most $\lceil \delta(m-1) \rceil$ grid points on each line parallel
to the $x_i$-axis.
\end{proof}

\begin{corollary}[Grid approximation]\label{corol_sconv_contains_grid_pts}
Let $S\subseteq \I^d$ be a stair-convex set, and let $g_S = \abs{ S
\cap \Gu(m)}$ be the number of points of the uniform grid contained
in $S$. Then,
\begin{equation*}
\bigl| g_S -  (m-1)^d\vol(S)\bigr| \le dm^{d-1}.
\end{equation*}
\end{corollary}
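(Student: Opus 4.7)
The plan is to derive both $(m-1)^d\vol(S) - dm^{d-1} \le g_S \le (m-1)^d\vol(S) + dm^{d-1}$ directly from Lemma~\ref{l:addcube} (with $\delta := 1/(m-1)$) by a cube-packing argument; no induction on $d$ is required. Write $C_p := p + [0,\delta]^d$ for the ``up-right'' grid cube at a grid point $p \in \Gu$, and $D_p := p + [-\delta,0]^d$ for the ``down-left'' one. The cubes $\{C_p : p \in \Gu \cap [0,1-\delta]^d\}$ are interior-disjoint and tile $\I^d$, and so are $\{D_p : p \in \Gu \cap [\delta,1]^d\}$; each has volume $\delta^d = 1/(m-1)^d$.

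For the upper bound, Lemma~\ref{l:addcube} gives $|S^{\delta-} \cap \Gu| \ge g_S - dm^{d-1}$, and by the definition $S^{\delta-} = S\ominus[0,\delta]^d$, every $p \in S^{\delta-}$ has $C_p \subseteq S$. Since these cubes are interior-disjoint, summing their volumes yields $\vol(S) \ge |S^{\delta-} \cap \Gu|\cdot\delta^d \ge (g_S - dm^{d-1})/(m-1)^d$, which rearranges to $g_S \le (m-1)^d\vol(S) + dm^{d-1}$.

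For the lower bound, I will cover $S^{\delta-}$ up to measure zero by cubes $D_p$ with $p \in S \cap \Gu$. For each $x \in S^{\delta-}$ with all coordinates strictly positive, define $p(x)_i := \lceil x_i/\delta\rceil\delta$, the coordinate-wise smallest grid point $\ge x$. Then $p(x) - x \in [0,\delta]^d$, so $p(x) \in x + [0,\delta]^d \subseteq S$ (using $x \in S^{\delta-}$), hence $p(x) \in S\cap\Gu$ and $x \in D_{p(x)}$. Since the set of $x \in S^{\delta-}$ with some coordinate equal to zero has measure zero, $\vol(S^{\delta-}) \le \vol\bigl(\bigcup_{p \in S\cap\Gu} D_p\bigr) \le g_S\cdot\delta^d$. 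Combining with $\vol(S^{\delta-}) \ge \vol(S) - d\delta$ from Lemma~\ref{l:addcube} gives $\vol(S) - d\delta \le g_S\delta^d$, i.e., $g_S \ge (m-1)^d\vol(S) - d(m-1)^{d-1} \ge (m-1)^d\vol(S) - dm^{d-1}$.

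The only substantive step is verifying that $p(x) \in S$ in the lower-bound argument, which is immediate from $x \in S^{\delta-}$; everything else is elementary cube-packing. Notably, stair-convexity enters only through Lemma~\ref{l:addcube}, which controls $S^{\delta-}$ in both directions.
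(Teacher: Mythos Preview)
Your proof is correct and follows essentially the same approach as the paper: both set $\delta=1/(m-1)$, use the grid-point inequality of Lemma~\ref{l:addcube} together with the interior-disjoint cubes $C_p\subseteq S$ for $p\in S^{\delta-}\cap\Gu$ to get the upper bound, and use the volume inequality of Lemma~\ref{l:addcube} together with the covering of $S^{\delta-}$ by the cubes $D_p$ with $p\in S\cap\Gu$ to get the lower bound. Your exposition is slightly more explicit (you write out $p(x)_i=\lceil x_i/\delta\rceil\delta$ where the paper just notes that any grid point $p$ with $D_p\cap S^{\delta-}\ne\emptyset$ lies in $S$), but the argument is the same.
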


\begin{proof}
Let $\delta:=\frac1{m-1}$ be the step of the grid $\Gu(m)$. For
every grid point $p\in \Gu \cap S^{\delta-}$, the cube
$p+[0,\delta]^d$ is contained in $S$, and since such cubes have
disjoint interiors, we have $\vol(S)\ge \delta^d
|S^{\delta-}\cap\Gu|\ge \delta^d g_S-\delta^d dm^{d-1}$ by the
second inequality in Lemma~\ref{l:addcube}. Multiplying by
$\delta^{-d}$ we get $g_S \le (m-1)^d\vol(S)+ dm^{d-1}$, one of the
inequalities in the corollary.

For the other inequality, we observe that if $p\in\Gu(m)$ is a grid
point such that the cube $p+[-\delta,0]^d$ intersects $S^{\delta-}$,
then $p\in S$. So using the first inequality of
Lemma~\ref{l:addcube} gives $\vol(S)\le \vol(S^{\delta-})+d\delta\le
\delta^d g_S + d\delta$, and we are done.
\end{proof}

\begin{proof}[Proof of Lemma~\ref{l:transfer}. ]
Let us prove part (i). So let $N$ a weak $\eps$-net for
$\Gs=\Gs(m)$, and let $s = |N|$.

Let us call a point $p\in \Gs$ \emph{good} if it is far apart from
every point of $N$; otherwise, $p$ is \emph{bad}. There are at most
$2ds m^{d-1}$ bad points in $\Gs$.

Let $\eps' := \eps + 2d(s+1)/m$, and let us consider a stair-convex
set $S'\subseteq \I^d$ of volume $\eps'$. By
Corollary~\ref{corol_sconv_contains_grid_pts}, $S'$ contains a set
$P'\subseteq \Gu$ of at least $\eps'(m-1)^d - dm^{d-1}$ grid points.

Let $P = \pi^{-1} (P')$ be the corresponding subset of $\Gs$. By
removing all bad points from $P$ we obtain a set $P^*$ of at least
$\eps' (m-1)^d - d(2s+1) m^{d-1} \ge \eps m^d$ good points. Since
$N$ is a weak $\eps$-net, there exists a point $x\in N\cap \conv
(P^*)$.

Since all points of $P^*$ are far apart from $x$, it follows by
Lemma~\ref{lemma_conv_sconv} that $x\in\sconv(P^*)$. Further, $\pi$
preserves order in each coordinate, so
\begin{equation*}
x':=\pi(x) \in \sconv\bigl(\pi(P^*)\bigr)\subseteq S'.
\end{equation*}
Since $x'\in \pi(N)$, this proves that $\pi(N)$ intersects every
stair-convex set of volume $\eps'$ in $\I^d$. This finishes the
proof of part (i) of the transference lemma.

Part (ii) is proved similarly, only with the roles of convexity and
stair-convexity interchanged.
\end{proof}

\section{Conclusion}\label{sec_conclusions}

In this paper we provide superlinear lower bounds for weak
$\frac1r$-nets, but the gaps between the known lower and upper
bounds for weak $\frac1r$-nets are still huge. The most significant
gaps are: between $\Omega(r\log r)$ and $O(r^2)$ for the general
planar case; between $\Omega{\bigl( r\log^{d-1} r\bigr)}$ and $O(r^d
\mathrm{polylog}\, r)$ for the general case in $\R^d$; and between
$\Omega(r)$ and $O(r\alpha(r))$ for planar point sets in convex
position.

The point set that allowed us to obtain the superlinear lower
bounds, the stretched grid, might be useful for further problems
too, especially since problems about convexity in the stretched grid
can be recast in purely combinatorial terms. One might ask, to what
extent the stretched grid is ``special'' as far as weak $\eps$-nets
are concerned.

On the one hand, it does provide stronger lower bounds than some
other sets. Namely, it is easy to show that there exist weak $\frac1r$-nets
of size $O(r\log r)$ for the uniform distribution in the $d$-dimensional
unit cube $\I^d$ (or for any sufficiently large finite uniformly
distributed set).\footnote{This is because there exist $\frac1r$-nets
of size $O(r\log r)$ with respect to ellipsoids, say, and
every convex set of volume $\eps$ contains an ellipsoid
of volume $\Omega(\eps)$ by the L\"owner--John theorem;
see, e.g., \cite{matou_DG} for background.}
Thus for $d\ge 3$ the stretched grids need strictly larger
weak $\frac1r$-nets than uniformly distributed sets.

On the other hand, we tend to believe that stretched grids are not
special in providing superlinear lower bounds: We conjecture that
\emph{no} sets in general position in $\R^d$, $d\ge 3$, admit
linear-size weak $\frac 1r$-nets. (More precisely: For every $C$
there exist $r$ and $n_0$, also possibly depending on $d$, such that
$f(X,r)\ge Cr$ for every $X\subset\R^d$ in general position and with
at least $n_0$ points.) This conjecture may be very hard to prove,
though, since it would also imply a superlinear lower bound for
$\frac 1r$-nets for geometrically defined set systems of bounded VC
dimension, which has been an outstanding problem in discrete
geometry for several decades.

\bibliographystyle{plain}
\bibliography{stair_convex_IJM}

\end{document}